\DeclareFontFamily{U}{matha}{\hyphenchar\font45}
\DeclareFontShape{U}{matha}{m}{n}{
      <5> <6> <7> <8> <9> <10> gen * matha
      <10.95> matha10 <12> <14.4> <17.28> <20.74> <24.88> matha12
      }{}
\DeclareSymbolFont{matha}{U}{matha}{m}{n}
\DeclareMathSymbol{\varleftrightarrow}{3}{matha}{"D8}
\DeclareMathSymbol{\nvarleftrightarrow}{3}{matha}{"DC}
\newtheorem*{lemma*}{Lemma}
\newtheorem{theorem}{Theorem}[section]
\newtheorem{lemma}[theorem]{Lemma}
\newtheorem{proposition}[theorem]{Proposition}
\theoremstyle{definition}
\newtheorem{definition}{Definition}
\newtheorem{remark}[theorem]{Remark}
\renewcommand{\P}{\mathbb{P}}
\newcommand\ba{\boldsymbol{a}}
\newcommand\bx{\boldsymbol{x}}
\newcommand\by{\boldsymbol{y}}
\newcommand\bz{\boldsymbol{z}}
\newcommand\bw{\boldsymbol{w}}
\newcommand{\E}{{\mathbb{E}}}
\newcommand{\1}{\mathds{1}}
\newcommand{\Z}{\mathbb{Z}}
\newcommand{\R}{\mathbb{R}}
\newcommand{\e}{\varepsilon}
\newcommand{\N}{\mathbb{N}}
	\renewcommand{\P}{\mathbb{P}}
\newcommand{\cA}{\mathcal{A}}
\newcommand{\cH}{\mathcal{H}}
\newcommand{\cI}{\mathcal{I}}
\newcommand{\cK}{\mathcal{K}}
\newcommand{\cZ}{\mathcal{Z}}
\newcommand{\sumtwo}[2]{\sum_{\substack{#1 \\ #2}}}
\newcommand{\wh}{\widehat}
\newcommand\sfp{\mathsf p}
\newcommand{\f}{\frac}
\newcommand{\grad}{\nabla}
\renewcommand{\setminus}{\backslash}
\newcommand\dd{\mathrm{d}}
\def\ba{\begin{align}}
\def\ea{\end{align}}
\def\bs{\begin{split}}
\def\es{\end{split}}
\begin{document}

\title{Sharp moment and upper tail asymptotics for the critical 2d Stochastic Heat Flow}
\author{Shirshendu Ganguly,   Kyeongsik Nam}
 
\begin{abstract}
While $1+1$ dimensional growth models in the Kardar-Parisi-Zhang universality class have witnessed an explosion of activity 
over the last few decades, higher dimensional models remain much less explored. The special case of $2+1$ dimensions is particularly interesting as it is, in physics parlance, neither ultraviolet nor infrared super-renormalizable.   Canonical examples include the stochastic heat equation (SHE) with multiplicative noise and directed polymers. The models exhibit a weak to strong disorder transition as the inverse temperature, up to a logarithmic (in the system size) scaling, crosses a critical value $\beta_c$. While the sub-critical picture has been established in detail,  
 only very recently, in a breakthrough work, \cite{shfcritical} constructed a scaling limit of the critical $2+1$ dimensional directed polymer partition function. This was termed as the critical $2d$ Stochastic Heat Flow (SHF), a random measure on $\R^2.$ As is true with various naturally occurring random measures, the SHF is expected to exhibit rich intermittent behavior. A particular manifestation of this is the rather rapid growth of its moment sequence. Considering, for instance, the scalar random variable given by the mass the SHF assigns to the unit ball on the plane, its $h^{th}$ moment was known to grow at least as $\exp(C_1 h^{2})$ \cite{shfnotgmc} (a consequence of the Gaussian correlation inequality) and at most as $\exp(\exp (C_2h^2))$ \cite{shfuppermoment} for some positive constants $C_1$ and $C_2$.  The true growth rate, however, was predicted to be $\exp(\exp (C h))$ for some positive constant $C$ in the late nineties \cite{conjecture}.  
In this paper, we prove a lower bound of the $h^{th}$ moment which matches the predicted value, thereby exponentially improving the previous best lower bound. As a consequence we also obtain rather sharp bounds on its upper tail.  The key ingredient in the proof involves establishing a new connection of the SHF and moments thereof to the Gaussian Free Field (GFF) on related Feynman diagrams. This connection opens the door to the rich algebraic structure of the GFF to study the SHF. In particular, our proof makes use of Kirchhoff's Matrix-Tree theorem to reduce estimating
moments to counting spanning trees on Feynman diagrams. Along the way we also prove a new
monotonicity property of the correlation kernel for the SHF as a simple consequence of the domain
Markov property of the GFF.
\end{abstract}

\address{Department of Statistics, UC Berkeley, USA} 
\email{sganguly@berkeley.edu}

\address{Department of Mathematical Sciences, KAIST, South Korea}
\email{ksnam@kaist.ac.kr}

\maketitle

\tableofcontents

\section{Introduction}

Consider the two-dimensional Stochastic Heat Equation (SHE)
\begin{align}\label{SHE}
    \partial_t Z = \frac{1}{2}\Delta Z+\beta  \xi Z, \qquad t>0,   x\in \R^2,
\end{align}
where $\xi$ is space-time white noise.
As is well known, this is ill-posed on account of the roughness of the noise term. Nonetheless, through a series of rigorous works \cite{bertini1998two, shfsubcritical0, shfcriticalmoment, chatterjee, shfsubcritical, gu,  shfuppermoment, shfcritical}, a rather deep picture has emerged. 

The dimension $2$ case is particularly interesting since it is neither ultraviolet (like dimension $1$) nor infrared (like dimensions greater than $2$) super-renormalizable \cite{ultra}. Nonetheless, one can indeed make sense of a solution of \eqref{SHE}. This involves a regularization and a renormalization procedure. Regularization is usually done by mollifying, in space, the noise term with a smooth bump function on a small scale. It turns out that if the mollification scale is $\e,$ then the renormalization involves taking $\beta$ to be ${\hat \beta}/{\sqrt{\log  \left({1}/{\e}\right)}}$ where $\hat \beta$ is $O(1).$

The model undergoes a weak to strong disorder phase transition at $\hat \beta=\hat \beta_c$. {It turns out that $\displaystyle \hat \beta_c = \sqrt{\pi}$ for the discrete  approximation scheme of the directed polymer introduced shortly, whereas in the continuous approximation—obtained by a mollification of the SHE in \eqref{SHE}—the critical threshold is  $\displaystyle \hat \beta_c = \sqrt{2\pi}$  (see \cite[Appendix]{shfnotgmc} for the details of comparison of the critical windows of these approximations).} While the focus of this paper is the critical regime, before diving into the contributions of this paper, to put things in context let us review the developments so far briefly.

The first computations on the $2d$ SHE
in the critical window were carried out in \cite{bertini1998two} motivated by the works on the delta-Bose gas in \cite{albeverio}. The critical scaling was rediscovered in the investigation of the broader context of marginal relevance carried out in \cite{shfsubcritical0} who also discovered the phase transition which, it turns out, was missed by the authors of \cite{bertini1998two}.
Subsequently, across the papers \cite{chatterjee, shfsubcritical0, gu}, with a comprehensive treatment in \cite{shfsubcritical}, the subcritical model, i.e., when {$\hat \beta <\hat \beta_c,$} has been studied in great detail. The supercritical regime still remains essentially unexplored but the critical case has witnessed some significant developments including the construction of its scaling limit -- the $2d$ critical Stochastic Heat Flow (SHF). To facilitate the upcoming more formal discussions and build intuition it will be helpful to introduce the pre-limiting model of the directed polymer in random environment (DPRE) at this stage. 
Towards this, define:
\begin{align}\label{polymer456}
    Z_{M,N}^{\beta}(x,y) :=\E\Big[  \exp\big(\sum^{N-1}_{n=M+1}(\beta\omega(n,S_n)-\lambda(\beta))\Big) \1_{\{S_{N}=y\}}   |  S_M=x\Big],
\end{align}
where $(S_n)_{n\geq 0}$ denotes a two-dimensional 
simple random walk, 
whose law 
and expectation 
are denoted 
by $\P$ and 
$ \E$ respectively, and $(\omega_{n,x})_{n\in \N, x\in \Z^2}$ is a family 
of i.i.d. random variables 
with mean $0$ and variance $1$ 
having a finite log-moment generating function $\lambda(\beta):=\log\E\big[ e^{\beta\omega} \big]<\infty$ ($\forall \beta\in \R$), which serve as the 
discrete analogue of a space-time white 
noise (for the rest of the article, we can simply assume that the variables are i.i.d. standard Gaussians). In other words, $Z_{M,N}^{\beta}(x,y)$ is the 
partition function of 
the directed polymer model 
where each space-time lattice point $(t,x)$  is equipped 
with an i.i.d. variable $\omega(t,x).$

Before proceeding further, it is worth remarking, however, that while the DPRE has served as the key discretized model in much of the developments around the study of the critical $2+1$ dimensional stochastic heat equation, it is unclear at this stage what the scope of the universality of the  SHF and the associated, yet to be constructed, $2+1$ dimensional KPZ  equation is, unlike the one dimensional story. The latter is known to encompass a range of models beyond polymers, including random matrices, particle systems, models of interface growth, and models of random geometry such as first and last passage percolation \cite{kpz1, kpz2, kpz3, kpz4, kpz5}.

The Critical $2d$ SHF was 
constructed in \cite{shfcritical}  as the unique 
limit of the fields 
\begin{align}\label{preSHF}
    \cZ_{N;  s,t}^\beta(\dd x, \dd y):=
    \f{N}{4}
    Z _{[  Ns ],[ Nt ]}^{\beta_N}
 (  \llbracket  \sqrt{N}x    \rrbracket
    ,
    \llbracket   \sqrt{N}y   \rrbracket
) 
    \dd x \dd y, \qquad 0\le s<t<\infty  ,
\end{align}
where $[\cdot]$ maps a real 
number to 
its nearest even 
integer neighbour, 
$\llbracket\cdot\rrbracket$ maps $\R^2$ 
points to 
their nearest even 
integer point on 
$\Z^2_\text{even}:=\{ (z_1,z_2)\in\Z^2:z_1+z_2\in 2\Z \}$, 
and $\dd x  \dd y$ denotes the Lebesgue 
measure on $\R^2 \times \R^2$. 
In this setting 
one needs 
to choose $\beta=\beta_N$ such that
\begin{align}\label{choiceb}
\sigma_N^2:= e^{\lambda(2\beta_N)-2\lambda(\beta_N)}-1 = \frac{1}{R_N} \Big(1+\frac{\theta+o(1)}{\log N} \Big),
\end{align}
{where $R_N$ denotes the expected number of collisions or overlap of two independent simple random walks up to time $N$. The asymptotics of the latter is given by $R_N = \frac{\log N}{\pi} + \bar c + o(1)$ for some absolute constant $\bar c$} and $o(1)$ denotes negligible 
corrections as $N\to \infty$, and $\theta \in \R$ is a fixed constant (see \cite[Appendix A.1]{shfnotgmc} for details).  The main result is recorded next. See also the recent survey \cite{caravenna2024critical} for a comprehensive account of this and related developments.

\begin{theorem} 
    Let $\beta_N$ be as in \eqref{choiceb} for 
some fixed $\theta \in \R$ and 
$\big( \cZ_{N;  s,t}^{\beta_N}(\dd x, \dd y) \big)_{0\le s<t<\infty}$ 
be defined as in \eqref{preSHF}. 
Then as $N\rightarrow\infty$, the process of 
random measures 
    $(\cZ_{N; s,t}^{\beta_N}(\dd x,\dd y))_{0\le s\le t<\infty}$ 
converges in 
finite dimensional 
distributions to 
a unique limit
    \begin{align*}
        \mathscr{Z}^\theta
        =
        (\mathscr{Z}_{s,t}^{\theta}(\dd x,\dd y))_{0\le s\le t<\infty},
    \end{align*}
    named the Critical 
2d Stochastic Heat Flow.
\end{theorem}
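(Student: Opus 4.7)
The plan is to establish convergence in finite-dimensional distributions via the method of moments applied to a polynomial chaos expansion of the rescaled partition function. Using independence of the noise, one writes
\begin{align*}
\prod_{n=M+1}^{N-1} e^{\beta_N \omega(n, S_n) - \lambda(\beta_N)} = \prod_{n} \bigl(1 + \sigma_N \zeta_{n, S_n}\bigr),
\end{align*}
with $\zeta_{n,x}$ centered of unit variance and $\sigma_N$ as in \eqref{choiceb}. Expanding the product and interchanging the walk expectation with the sum yields the Wiener-chaos decomposition
\begin{align*}
Z_{M,N}^{\beta_N}(x,y) = \sum_{k \ge 0} \sigma_N^k \sum_{\substack{M < n_1 < \cdots < n_k < N \\ z_1, \ldots, z_k \in \Z^2_{\text{even}}}} q_{n_1 - M}(z_1 - x) \Bigl[\prod_{j=2}^k q_{n_j - n_{j-1}}(z_j - z_{j-1})\Bigr] q_{N - n_k}(y - z_k) \prod_{j=1}^k \zeta_{n_j, z_j},
\end{align*}
where $q_m(z)$ denotes the $m$-step random walk transition kernel. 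This identity transports the problem from the original $Z$-variables to a Gaussian-polynomial object amenable to combinatorial analysis.

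Next I would compute joint moments $\E\bigl[\prod_i \langle \cZ_{N; s_i, t_i}^{\beta_N}, \Phi_i \rangle^{h_i}\bigr]$ for smooth test functions $\Phi_i : (\R^2)^2 \to \R$, and show convergence as $N \to \infty$. By orthogonality of the chaos combined with Wick's formula, the joint moment of total order $h = \sum_i h_i$ decomposes into a sum over \emph{Feynman diagrams}: one draws $h$ independent two-dimensional simple random walks and groups their trajectories into pairwise matched clusters at shared space-time sites, each cluster contributing a factor of $\sigma_N^2$. Summing along a two-walk ``ladder'' of arbitrarily many matching events produces, under diffusive scaling, a geometric-type series in $\sigma_N^2 R_N = 1 + \frac{\theta + o(1)}{\log N}$, which in the limit converges to an explicit limiting kernel $G_\theta$ built from the $2d$ heat kernel and a Dickman-subordinator resolvent that incorporates the parameter $\theta$.

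The main technical obstacle is to pass to the limit uniformly over the combinatorial type and number of matching clusters in the diagrams. For this I would reorganize the diagram sum as iterations of a symmetric Hilbert--Schmidt operator on an appropriate weighted $L^2$ space, whose operator norm, by virtue of the critical renormalization in \eqref{choiceb}, remains strictly below $1$ asymptotically. This enables dominated convergence diagram-by-diagram and simultaneously yields moment bounds of order $\exp(\exp(Ch))$, well within the Carleman threshold for moment determinacy. A parallel $L^2$ estimate on the two-walk diagrams confirms tightness of $(\cZ_{N; s, t}^{\beta_N})$ and rules out mass escape.

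Finally, I would verify that the limiting joint moments are consistent under the natural semigroup structure coming from concatenation of intermediate time intervals, and deduce uniqueness of the limit either via Carleman's criterion applied to the one-dimensional marginals $\langle \mathscr{Z}^\theta_{s,t}, \Phi \rangle$ or by directly identifying the limiting moments with those of a random measure built from a Wiener-chaos expansion with the computed kernels as coefficients. Combining moment convergence with this uniqueness upgrades to finite-dimensional distribution convergence, and the flow property of $(\mathscr{Z}^\theta_{s,t})_{0 \le s \le t < \infty}$ is inherited from the semigroup identity of the limit kernels.
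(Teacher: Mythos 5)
This theorem is not proved in the paper you were given; it is quoted from \cite{shfcritical}, so the only meaningful comparison is with the known Caravenna--Sun--Zygouras argument. Measured against that, your proposal has a genuine gap at its most important step: uniqueness/identification of the limit. You compute (joint) moments via the chaos expansion and Feynman diagrams, obtain bounds of order $\exp(\exp(Ch))$, and then claim this is ``well within the Carleman threshold for moment determinacy.'' It is not: Carleman's condition requires $\sum_h \E[X^{2h}]^{-1/(2h)}=\infty$, and with $\E[X^h]\approx\exp(e^{ch})$ the terms decay super-exponentially, so the criterion fails and the moment sequence is indeterminate --- exactly as the paper itself remarks (``the fast growth of its moments \ldots renders the moment sequence indeterminate''). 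Hence convergence of all joint moments does not pin down the limiting finite-dimensional distributions, and your fallback of ``identifying the limiting moments with those of a random measure built from a Wiener-chaos expansion'' also fails, since the critical limit is provably not representable as a Gaussian chaos (cf.\ \cite{shfnotgmc}); the chaos series in the limiting white noise diverges at criticality. The actual proof of uniqueness in \cite{shfcritical} bypasses moments entirely, using a coarse-graining scheme together with a Lindeberg-type exchange argument that exploits independence in time and the convolution (Chapman--Kolmogorov) structure --- the very point the paper flags when citing \cite{tsai}.

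A second, related problem is your operator-theoretic control of the diagram sums: you assert that the relevant Hilbert--Schmidt ladder operator has norm strictly below $1$ asymptotically ``by virtue of the critical renormalization in \eqref{choiceb}.'' That is the hallmark of the subcritical regime $\hat\beta<\hat\beta_c$; in the critical window one has $\sigma_N^2R_N=1+\frac{\theta+o(1)}{\log N}$, the two-walk ladder is marginally critical (this marginal divergence is precisely what produces the Dickman kernel $G_\theta$), and higher moments cannot be tamed by a contraction --- controlling them requires the delicate functional-analytic estimates of \cite{shfuppermoment}, which only give $\exp(\exp(ch^2))$. So even the convergence-of-moments part of your plan needs a different mechanism than the one you describe, and the uniqueness part needs an altogether different idea.
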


Often, various field theories, which the SHF maybe viewed as one, exhibit rich fractal or intermittent behavior. A particularly canonical example of this is the Gaussian multiplicative chaos (GMC) obtained by exponentiating the Gaussian free field, see e.g.  \cite{gmc} for a beautiful survey on the developments around the construction of GMCs and the relevance to the study of quantum field theories. While, importantly, the SHF was proven to be not an exponential of a Gaussian process by proving certain moment inequalities in \cite{shfnotgmc}\footnote{The polymer measure associated to SHF was constructed in \cite{clark2024continuum} and may be viewed as a Gaussian multiplicative chaos on the path space in an appropriately conditional sense shown very recently in \cite{CT25}.}, it is still expected to exhibit rather intriguing intermittent behavior. The main motivation for this work is to initiate a program to carry out a comprehensive investigation of this.

Intermittent behavior in the $1+1$ dimensional polymer models and the associated solution to the stochastic heat equation (the logarithm of which is the Cole-Hopf solution of the KPZ equation) has witnessed impressive progress over the last decade. While a comprehensive review is beyond the scope of the paper, let us point the reader to some of the developments in {\cite{book,inter,inter2,ganguly,lil}}. Perhaps closer to the subject of the paper are the results on the intermittent properties of solutions of the Parabolic-Anderson model in two dimensions {\cite{fractal, fractal2, konig}}. Even for the SHF, certain results have recently been established in \cite{shfsingularity}. One of the main results in the latter focused on the SHF mass of a small ball of radius $\e,$ say $B=B(0,\e),$ i.e., the ball of radius $\e$ around the origin.  Namely, consider the random variable $X_\e$ given by 
\begin{align} 
X_\e:= \frac{\int_{x\in B}\mathscr{Z}_t^\theta(\dd x, \1)}{\pi \e^2}
\end{align}
where
$$\mathscr{Z}_t^\theta(\dd x, \1)
:=\int_{y\in \R^2} \mathscr{Z}_{0,t}^\theta(\dd x, \dd y).
$$

Note that $X_\e$ is the mass SHF assigns to $B$, i.e., the partition function of the polymer with one end point uniformly chosen inside $B$ and another completely free, relative to its Lebesgue measure. 
This normalization ensures that $\E(X_\e)$ has mean one for all $\e>0$. 

In \cite{shfsingularity} it was nonetheless shown that $X_\e \overset{p} \rightarrow 0$ ($\overset{p}\rightarrow$ denotes convergence in probability) as $\e \to 0.$
The approach was essentially to set up a comparison, via monotonicity of fractional moments, to a slightly subcritical model on a renormalized space-time lattice, say, corresponding to  ${\hat \beta}/{\sqrt{\log\left(\frac{1}{\e}\right)}} $ where $\hat \beta$ is strictly less than $\hat \beta_c$. It is known in this case, say from \cite{shfsubcritical}, that as $\e \to 0$, $X_\e \overset{p} \rightarrow W$ where $W$ is a log-Normal variable of mean one corresponding to a Gaussian with variance approximately  $\frac{1}{1-\left({\hat \beta}/{\hat \beta_c}\right)^2}$. It is straightforward to see that such a log-Normal variable must converge to zero in probability as $\hat \beta$ converges to $\hat\beta_c$. This along with the comparison indicated above suffices to prove the convergence to zero in probability for $X_\e$ in the critical case. In particular, this shows that the Critical $2d$ SHF is almost surely singular with respect to Lebesgue measure.

Nonetheless, such comparison results aren't powerful enough to get sharper results such as tail bounds which is the central motivation for the present work.  To precisely state the problem, take any smooth non-negative test function $\varphi$ on $\R^2$ such that $\varphi(0)>0$. Define 
\begin{align}\label{flatdata} 
\mathscr{Z}^\theta_{t}(\varphi):=\int_{\R^2}  \varphi(x)   \mathscr{Z}_t^\theta(\dd x, \1).
\end{align}
For simplicity, we will for the moment focus on the case $t=1.$ {A scaling covariance property of the SHF will allow us to translate the results from the $t=1$ case to any $t>0$ (see Remark \ref{scaling1} below).}
Further, for the choice of the test function, the case $\varphi=\mathbf{1}_B$ where $B=B(0,1)$ is the unit ball could be all the reader might want to keep in their mind. Let us also,  for notational brevity, denote  $\mathscr{Z}^\theta_{1}(\varphi)$ by 
\newcommand{\vp}{\varphi}
$X_{\vp}.$

Towards obtaining tail estimates we will mainly focus on the moment problem for $X_\vp.$ The problem has some history. In fact, the {tightness of the family $(\cZ_{N; 0,1}^{\beta_N}(\dd x,\dd y))_{N \in \mathbb N}$ follows immediately from a first-moment analysis, while the non-triviality of sub-sequential limits was first established by controlling the second and third moments of observables of the form $\mathscr{Z}^\theta_{1}(\varphi)$ in \cite{bertini1998two, shfcriticalmoment}.} Further, it was predicted in the late nineties in {\cite{conjecture}} that $\E(X^h_{\vp})$ should grow double exponentially in $h,$ i.e., {$\exp(\exp (ch))$ for some constant $c = c(\theta)>0$.} Nonetheless, mathematical progress towards this has still some ways to go.
In \cite{shfuppermoment}, using functional analytic tools the following upper bound was proven:
\begin{align}\label{upperbound1}
\E[X^h_{\vp}] \leq \exp(\exp(c h^2)), 
\end{align}
for some constant $c>0$ (see also \cite{chen1,chen2,chen3}).  On the other hand, the best lower bound until now, which is a consequence of the Gaussian correlation inequality \cite{gaussian1, gaussian2}, is
\begin{equation} \label{fkg12}
\exp\left({c}{h\choose 2}\right)
\end{equation}
{for some constant $c>0$,} 
and can be found, for instance, in \cite{shfnotgmc}.
One of the key reasons why the study of the SHF is delicate is because of the fast growth of its moments which renders the moment sequence indeterminate. In \cite{tsai}, it was shown however that the inbuilt independence in time and the convolution structure for the SHF allowed a Lindeberg type strategy to prove uniqueness provided the first few moments match.

\subsection{Results}
With the above context we are now in a position to state the results of this paper.
The main result of this article is an almost sharp lower bound, thereby obtaining an exponential improvement on the above stated previous lower bound.
\begin{theorem}\label{main} 
There exists an absolute constant $c_0>0$ such that the following holds.
{For any fixed $\theta \in \R$ and a smooth non-negative test function $\varphi$ on $\R^2$ such that $\varphi(0)>0$}, let $X_{\vp}=\mathscr{Z}^\theta_{1}(\varphi)$ be as above. Then for all large positive integer $h$,
\begin{align}  \label{mainalign}
 \E[ X^h_\vp]   \ge  \exp \big(\exp \left(c_0h \right)\big).
\end{align}
\end{theorem}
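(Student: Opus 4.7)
The plan is to establish a Feynman diagram expansion for $\E[X_\vp^h]$, then lower bound it by re-interpreting each diagram's contribution through a Gaussian Free Field (GFF) representation combined with Kirchhoff's matrix-tree theorem. Concretely, following the chaos/collision expansion for moments of the critical SHF developed in \cite{bertini1998two, shfcriticalmoment, shfcritical}, one writes $\E[X_\vp^h]$ as a positive sum indexed by Feynman diagrams encoding sequences of pairwise collisions among $h$ interacting trajectories, where each diagram contributes a multi-dimensional integral of a non-negative product of critical two-point SHF kernels $K_\theta$. Since every summand is non-negative, restricting to any favourable sub-family of diagrams immediately yields a lower bound on $\E[X_\vp^h]$, reducing the theorem to an analytic/combinatorial problem on the diagrams.

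The core new input---advertised in the abstract---is to recast the integral attached to each Feynman diagram as a GFF partition function on a weighted graph naturally associated with the diagram. Two structural consequences would then be exploited. First, the domain Markov property of the GFF delivers a monotonicity of the SHF correlation kernel under enlargements of the underlying graph, so that a complicated target diagram can be lower bounded by evaluation on a simpler, dense auxiliary graph on which explicit computation is feasible. Second, Kirchhoff's matrix-tree theorem rewrites the relevant Gaussian determinant on that auxiliary graph as a weighted sum over its spanning trees, reducing the analytic problem to a combinatorial count.

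With these tools in hand, I would choose a hierarchical family of diagrams---for instance, a recursive dyadic grouping of the $h$ external legs at geometric time/spatial scales---so that after applying the GFF monotonicity the problem collapses to a single dense auxiliary graph whose spanning trees, weighted by the short-distance growth of $K_\theta$ (the resolvent of the $2d$ $\delta$-Bose operator with its $\theta$-renormalization), accumulate to at least $\exp(\exp(c_0 h))$. The main obstacle I anticipate is twofold. First, rigorously translating the SHF collision integrals into Gaussian/GFF expectations is delicate: since the SHF is not itself a GMC by \cite{shfnotgmc}, the correspondence can only operate at the level of moment kernels, and the $\theta$-renormalization must be tracked carefully through the identification. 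Second, the extremal diagram has to be chosen so that its weighted spanning-tree count escapes the pair-based $\exp(c\binom{h}{2})$ benchmark in \eqref{fkg12}; for this escape, the auxiliary graph must have a number of internal collision vertices that is at least exponential in $h$, so that a Cayley-type tree count on a graph with doubly-large vertex set (relative to the $h$ external legs) can produce the target double-exponential growth.
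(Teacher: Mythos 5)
Your outline reproduces the advertised ingredients (collision/Feynman expansion, GFF rewriting of the spatial integrals, matrix--tree theorem, exponentially many collision vertices), but the quantitative mechanism you describe is inverted, and this is a genuine gap. For a fixed diagram (collision pattern and times), the GFF identity gives the spatial integral as $(2\pi)^{|V|-1}\big(\sum_{T}c_T\big)^{-1}$, i.e.\ the weighted spanning-tree sum sits in the \emph{denominator}. So choosing an auxiliary graph whose ``spanning trees \dots accumulate to at least $\exp(\exp(c_0h))$'' would make the diagram's contribution small, not large; a Cayley-type abundance of trees works against you. What is actually needed is an \emph{upper} bound on $\sum_T c_T$: the paper arranges the diagram graph so every internal vertex has degree at most $3$ (hence at most $3^{2m}$ spanning trees), confines the times to the windows \eqref{a}--\eqref{b} so the curly-edge conductances dominate and each tree's weight is controlled by the largest conductances, and then bounds the remaining edge lengths on average through the gaps $r-\sfp(i_r)$ via an AM--GM/total-jump-length argument (Lemma \ref{short}). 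The double exponential then comes not from the tree count but from summing over all $\asymp {h\choose 2}^m$ collision patterns with $m\approx e^{ch}$ collisions, each diagram contributing roughly $\big(\mathrm{const}\cdot h/\log m\big)^m$ after integrating the $G_\theta$ and time variables; the per-collision gain stays above $1$ precisely because $m$ is only exponentially large in $h$. Your intuition that the number of internal collision vertices must be exponential in $h$ is right, but the engine you propose (maximizing a weighted tree count on a dense auxiliary graph) would not deliver the bound.

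A second gap concerns the role of the domain Markov property. You invoke a ``monotonicity of the SHF correlation kernel under enlargements of the underlying graph'' to lower bound a complicated diagram by a dense auxiliary one; no such statement is formulated or proved, and it is not how monotonicity enters. The monotonicity actually available (Proposition \ref{monotone}) is in the dilation parameter $\alpha$ of the boundary points: writing $\phi^{(\alpha)}=\widetilde\phi^{(\alpha)}+\alpha\cH$ with $\cH$ the harmonic extension splits off the factor $\exp\big(-\alpha^2\sum_{\{u,v\}}c_{uv}|\cH_u-\cH_v|^2\big)$, so $K^{(h)}(\alpha\bz)$ is non-increasing in $\alpha$. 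Its purpose is not diagram-to-diagram comparison but the reduction from Gaussian initial data---where Chapman--Kolmogorov collapses the $z$-integrations and makes the GFF computation tractable---to a compactly supported $\varphi$, via the co-area comparison of Lemma \ref{compare}. Your proposal leaves this reduction unaddressed: working directly with a general $\varphi$ the $z$-integrals do not simplify, and without a proved comparison statement the lower bound for $\varphi\ge c\1_{[-\delta,\delta]^2}$ does not follow from the Gaussian-data computation. In short, you would need to (i) reverse the role of the spanning-tree count and supply the degree/conductance control that makes an upper bound on it possible, and (ii) replace the unspecified graph-enlargement monotonicity by a proved statement (such as dilation monotonicity plus a comparison lemma) to handle general test functions.
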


Along the way we also prove a useful monotonicity result about certain correlation kernels whose formal statement needs more preparation and is presented later as Proposition \ref{monotone}.

{
\begin{remark} \label{scaling1}
We remark  that for any $t>0$, the same bound \eqref{mainalign} holds for sufficiently large  $h$ (depending on $t$).  This follows from the following scaling property of the  Critical $2d$ SHF which is quoted from \cite[Theorem 1.2]{shfcritical}). For any $\mathsf{a}>0$,
\begin{align}  \label{scalingrelation}
(\mathscr{Z}^\theta_{\mathsf{a}s,\mathsf{a}t} (\dd (\sqrt{\mathsf{a}} x),\dd (\sqrt{\mathsf{a}} y)) )_{0\le s\le t < \infty}\overset{\text{law}}{=} (\mathsf{a} \mathscr{Z}^{\theta + \log \mathsf{a}}_{s,t} (\dd x,\dd y))_{0\le s\le t < \infty}.
 \end{align}
  We elaborate more on this in Remark \ref{scaling2} later.
\end{remark}
}

Finally, while a matching upper bound is still elusive (we will comment on what would be needed to be shown later in Remark \ref{upperarg}),  the current best upper bound of $\exp(\exp (ch^2))$ (with $c>0$) along with Theorem \ref{main} already allows us to obtain rather sharp  tail bounds of $X_\vp.$

{\begin{theorem} \label{tail}
  Let $\theta \in \R$ be fixed and $\varphi$  be a compactly supported smooth non-negative function on $\R^2$ such that $\varphi(0)>0$.  Then for all large enough $z,$
    \begin{align*}
      \exp\Bigl(-   (\log z)^{(\log\log z)^{1+o(1)}}\Bigr)\le   \P(X_\vp  > z)\le  \exp\Bigl(-\Omega(1) \cdot \log z  \cdot \sqrt{\log\log z} \Bigr).
    \end{align*}
    Here, $o(1) \rightarrow 0$ as $z\rightarrow \infty$ and $\Omega(1)$ remains bounded below by a positive constant.
    \end{theorem}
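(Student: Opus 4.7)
The plan is to derive both inequalities of Theorem \ref{tail} from the available moment bounds: the upper bound $\E[X_\vp^h] \le \exp(\exp(c_1 h^2))$ of \cite{shfuppermoment} and the lower bound $\E[X_\vp^h] \ge \exp(\exp(c_0 h))$ supplied by Theorem \ref{main}. The upper tail will come from a straightforward Markov inequality, and the lower tail from a Paley--Zygmund second-moment argument, which will also invoke the upper moment bound at level $2h$.

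For the upper tail, Markov's inequality gives, for any positive integer $h$,
\begin{align*}
\P(X_\vp > z) \le z^{-h}\E[X_\vp^h] \le \exp\bigl(\exp(c_1 h^2) - h \log z\bigr).
\end{align*}
Choosing $h = \lfloor \sqrt{(\log\log z)/(2c_1)} \rfloor$ forces $\exp(c_1 h^2) \le \sqrt{\log z}$, which is negligible compared with $h \log z = \Theta(\log z \cdot \sqrt{\log\log z})$, so the right-hand side becomes $\exp(-\Omega(1) \cdot \log z \cdot \sqrt{\log\log z})$, exactly the claimed upper bound.

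For the lower tail, Paley--Zygmund applied to $X_\vp^h$ yields
\begin{align*}
\P\bigl(X_\vp^h \ge \tfrac{1}{2}\E[X_\vp^h]\bigr) \ge \frac{\E[X_\vp^h]^2}{4\E[X_\vp^{2h}]} \ge \tfrac{1}{4}\exp\bigl(2\exp(c_0 h) - \exp(4 c_1 h^2)\bigr).
\end{align*}
One then selects $h$ so that $\tfrac{1}{2}\E[X_\vp^h] \ge z^h$, which enforces the inclusion $\{X_\vp^h \ge \tfrac{1}{2}\E[X_\vp^h]\} \subseteq \{X_\vp \ge z\}$; this reduces to the constraint $\exp(c_0 h) \gtrsim h \log z$, for which $h = \lceil (2/c_0)\log\log z\rceil$ suffices since then $\exp(c_0 h) \ge (\log z)^2 \gg h \log z$. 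Because $(\log\log z)^2 \gg \log\log z$, the $\exp(4c_1 h^2)$ term dominates $2\exp(c_0 h)$ at this scale and Paley--Zygmund returns $\P(X_\vp > z) \ge \exp\bigl(-\exp(C(\log\log z)^2)\bigr)$ with $C = 16 c_1/c_0^2$. Writing $u := \log\log z \to \infty$, the elementary identity $e^{C u^2} = e^{u^{2 + (\log C)/\log u}} = e^{u^{2+o(1)}}$ converts this into $\exp(-(\log z)^{(\log\log z)^{1+o(1)}})$, matching the target.

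Once Theorem \ref{main} is in hand the argument is essentially routine; the only step requiring care is the bookkeeping of the $h\sim \log\log z$ optimization in Paley--Zygmund and the verification that the $\exp(4c_1 h^2)$ term indeed dominates $2\exp(c_0 h)$ at this scale. The gap between the two tail bounds, i.e., exponents $\log z \sqrt{\log\log z}$ versus roughly $(\log z)^{\log\log z}$, directly mirrors the current gap between the best known moment upper and lower bounds, so a sharp matching tail estimate would in particular require a sharp matching moment bound.
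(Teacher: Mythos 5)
Your proof is correct. The upper-tail half is exactly the paper's argument: Markov's inequality with $h\asymp\sqrt{\log\log z}$, and your bookkeeping ($\exp(c_1h^2)\le\sqrt{\log z}\ll h\log z$) is fine. The lower-tail half, however, takes a genuinely different route. The paper writes $\E[X_\varphi^h]=\int_0^\infty h t^{h-1}\P(X_\varphi\ge t)\,\dd t$, splits the integral at $z$ and at a cutoff $w=\exp(\exp(cL^2M^2))$ chosen so that the a priori tail bound (itself derived from the $\exp(\exp(ch^2))$ moment upper bound) makes the far range contribute $O(1)$, and then solves for $\P(X_\varphi\ge z)$ from the middle range, using Theorem \ref{main} to absorb the $z^h$ term; it takes $h\asymp\log\log z\cdot\log\log\log z$ so that the absorption works for every value of $c_0$ without tuning constants. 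You instead apply Paley--Zygmund to $X_\varphi^h$, which uses the same two inputs (the moment lower bound at level $h$ and the moment upper bound, now at level $2h$ rather than through an a priori tail estimate), and you tune $h=\lceil(2/c_0)\log\log z\rceil$ so that $\tfrac12\E[X_\varphi^h]\ge z^h$; your verification that $\exp(c_0h)\ge(\log z)^2\gg h\log z$ is correct, as is the conversion $\exp(-\exp(C(\log\log z)^2))=\exp(-(\log z)^{(\log\log z)^{1+o(1)}})$. Your packaging is arguably cleaner and even marginally sharper: the exponent $\exp(O((\log\log z)^2))$ avoids the extra $(\log\log\log z)^2$ factor the paper's choice of $h$ produces, though both land within the stated $o(1)$; the small price is that your constant in the choice of $h$ depends on $c_0$, and that you invoke the moment upper bound at level $2h$ (harmless, since it holds for all $h$). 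The only cosmetic gap, shared with the paper, is that the argument bounds $\P(X_\varphi\ge z)$ rather than $\P(X_\varphi>z)$, which is repaired by running it at $2z$, say. Your closing observation that the tail gap mirrors the moment gap is also consistent with the paper's Remarks \ref{optrem} and \ref{elab}.
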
}
    
    Note that the above shows that the tail is super-polynomial, albeit barely, i.e., ignoring $\log\log (z)$ terms, the tail  drops faster than any power.
On the other hand, for $d\geq 3$, in \cite{junk2024tail}, it was shown that the limiting partition function of the DPRE has a power law resulting in the finiteness of moments only up to a certain order.

\begin{remark}\label{optrem} A matching upper bound of {$\exp(\exp (ch))$} will allow us to upgrade the tail bound to be  
\begin{align*}
 \exp\Bigl(-   (\log z)^{{O(1)} \cdot \log \log \log z}\Bigr)\le   \P(X_\vp  > z)\le  \exp\Bigl(-\Omega(1) \cdot  \log z  \cdot \log\log z \Bigr),
    \end{align*}  
{where  $O(1)$ remains bounded above by a positive constant.}
   We elaborate more on this after the proof of the above theorem in Remark \ref{elab}. 
\end{remark}

\begin{remark} 
Sharp moment bounds in the microscopic case of $X_\e$ as $\e \to 0$ were computed previously in \cite{shrinking}.  
For all $h\geq 2$, $t>0$ and $\theta\in \R$ there exist a constant $C=C(h,\theta,t)$ such that
\begin{align}\label{thm:mom12}
C \big(\log \tfrac{1}{\e}\big)^{h \choose 2} 
\le \E\Big[ X^h_\e \Big] 
\le  \big(\log \tfrac{1}{\e}\big)^{{h \choose 2} +o(1) },
\end{align}
Note that the lower bound turns out to be sharp in this case. We will expand on this further later in the article in Remark \ref{erdostaylor}.
\end{remark}

\subsection{Acknowledgements} SG heartily thanks Francesco Caravenna, Rongfeng Sun and Nikos Zygouras for multiple long discussions, their willingness to explain various details of their important body of work on $2+1$ dimensional polymer models as well as their detailed comments that helped improve the article. In particular, a suggestion of Francesco Caravenna helped remove a lossy logarithmic factor in an earlier version of the main result.
SG was partially supported by NSF Career grant-1945172.
KN was supported by
Samsung Science and Technology Foundation under Project Number SSTF-BA2202-02. Part of
this work was completed during KN’s visit to UC Berkeley in the winter of 2024. The authors also thank Kaihao Jing for help with the figures.\\

In the upcoming Section \ref{sec2} we present a short overview of the key ideas that drive the proofs. We will also use this opportunity to  lay down some notational foundation and review the essential preliminaries for the  Critical  $2d$ SHF which will also allow us formally state the already alluded to monotonicity result.

\section{Preliminaries and key ideas} \label{sec2}
We start by recording some pertinent facts about the Critical $2d$ SHF (we refer to \cite{shfcritical} for the details). As is often the case, moments of polymer models are related to exponential moments of the number of collisions of independent random walks. A useful approach to study such
moments is via enumerating Feynman diagrams. This was indeed the method adopted in the work
of Carravena-Sun-Zygouras \cite{shfcritical,shfcriticalmoment}. Thus, some of the facts and formulas that appear next which might seem a bit mysterious stems from the analysis of Feynman diagrams carried out across the various papers \cite{shfcritical,shfcriticalmoment,shfuppermoment,shrinking}.

The first moment of the 
Critical $2d$ SHF is 
given by 
\begin{equation}\label{moment1}
	\E[\mathscr{Z}^\theta_{s,t}(\dd x, \dd y)]
	= \tfrac{1}{2}   g_{\frac{1}{2}(t-s)}(y-x)   \dd x   \dd y  ,
\end{equation}
where 
\begin{equation}\label{heatkernel1}
g_t(x):=\frac{1}{2\pi t}e^{-\frac{|x|^2}{2t}}
\end{equation}
denotes the 
two-dimensional heat kernel. 
The covariance kernel is as follows: 
\begin{equation}  
\begin{aligned}
	\text{Cov}[\mathscr{Z}^\theta_{s,t}(\dd x, \dd y), \mathscr{Z}^\theta_{s,t}(\dd x', \dd y')]
	&= \tfrac{1}{2}   K_{t-s}^\theta(x,x'; y, y')   \dd x   \dd y   \dd x'   \dd y'  ,
\end{aligned}
\end{equation}
where 
\begin{equation}
\label{eq:m2-lim}
\begin{split}
	K_{t}^{\theta}(x,x'; y,y')
	& :=  \pi \: g_{\frac{t}{4}}\big(\tfrac{y+y'}{2} - \tfrac{x+x'}{2}\big)
	\!\!\! \iint\limits_{0<a<b<t} \!\!\! g_a(x'-x)  
	G_\theta(b-a)   g_{t-b}(y'-y)   \dd a   \dd b  .
\end{split}
\end{equation}
In the above 
formula, $G_\theta(t)$ 
denotes the 
density of the 
renewal function of 
 the {Dickman subordinator} constructed 
 in \cite{shf2}, and its expression is given by 
\begin{align} 
G_\theta(t)=\int_0^\infty \frac{e^{(\theta-\gamma)s} st^{s-1}}{\Gamma(s+1)} \dd s,
\end{align}
where $\gamma:=-\int_0^\infty ( \log u) e^{-u} \dd u $ denotes the Euler–Mascheroni  constant 
and $\Gamma(s)$ denotes the Gamma 
function. The following 
lemma provides the asymptotics 
of $G_\theta(t)$ as $t \downarrow 0.$ 
\begin{lemma}[Proposition 1.6 in \cite{shf2}] \label{lemma asym} 
As $t \downarrow 0$, 
\begin{equation} 
	G_\theta(t) = \frac{1}{t(\log\frac{1}{t})^2} \bigg\{ 1 + \frac{2\theta}{\log\frac{1}{t}}
	+ O\bigg(\frac{1}{(\log\frac{1}{t})^2}\bigg) \bigg\},
\end{equation}
and 
\begin{equation} 
\int_0^t G_\theta(s) {\rm d}s = \frac{1}{\log \frac{1}{t}} \bigg\{ 1 + \frac{\theta}{\log\frac{1}{t}}
	+ O\bigg(\frac{1}{(\log\frac{1}{t})^2}\bigg) \bigg\} .
\end{equation}
\end{lemma}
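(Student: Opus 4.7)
The plan is to run a Watson's lemma style asymptotic analysis on the defining integral after the natural substitution $u := \log(1/t) \to \infty$ as $t \downarrow 0$. Writing $t^{s-1} = t^{-1} e^{-us}$ and using the identity $s/\Gamma(s+1) = 1/\Gamma(s)$, the definition of $G_\theta$ becomes a Laplace integral
\begin{equation*}
G_\theta(t) \;=\; \frac{1}{t} \int_0^\infty \frac{e^{-\alpha s}}{\Gamma(s)}\, ds, \qquad \alpha \;:=\; u - \theta + \gamma.
\end{equation*}
Since $\alpha \to \infty$, the integral is dominated by the behavior of $1/\Gamma(s)$ near $s=0$. From $\Gamma(s+1) = 1 - \gamma s + O(s^2)$ one obtains $1/\Gamma(s) = s + \gamma s^2 + O(s^3)$, and combining with the elementary moment formula $\int_0^\infty s^k e^{-\alpha s}\,ds = k!/\alpha^{k+1}$ yields $G_\theta(t) = \tfrac{1}{t}\bigl(\alpha^{-2} + 2\gamma \alpha^{-3} + O(\alpha^{-4})\bigr)$.

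To pass from an expansion in $1/\alpha$ to the claimed expansion in $1/u$, I would substitute $\alpha = u - \theta + \gamma$ and Taylor expand: $\alpha^{-2} = u^{-2}\bigl(1 + 2(\theta-\gamma)/u + O(1/u^2)\bigr)$ and $2\gamma \alpha^{-3} = 2\gamma/u^3 + O(1/u^4)$. Adding, the two occurrences of $\gamma/u^3$ cancel to leave the stated
\begin{equation*}
G_\theta(t) \;=\; \frac{1}{t u^2}\Bigl(1 + \frac{2\theta}{u} + O(1/u^2)\Bigr).
\end{equation*}
For the integrated asymptotic, I would apply Fubini (justified by absolute convergence) and the identity $\int_0^t s^{r-1}ds = t^r/r$ to rewrite
\begin{equation*}
\int_0^t G_\theta(s)\, ds \;=\; \int_0^\infty \frac{e^{-\alpha r}}{\Gamma(r+1)}\, dr,
\end{equation*}
and then expand $1/\Gamma(r+1) = 1 + \gamma r + O(r^2)$ near $r=0$ to obtain $\alpha^{-1} + \gamma \alpha^{-2} + O(\alpha^{-3})$. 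The term $(\theta - \gamma)/u^2$ from the expansion of $\alpha^{-1}$ combines with $\gamma/u^2$ from $\gamma \alpha^{-2}$ to give the stated $\theta/u^2$ coefficient.

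The main technical obstacle is justifying the term-by-term integration, i.e., the Watson's lemma style error control. Concretely, I need (a) a cubic remainder bound $|1/\Gamma(s) - s - \gamma s^2| \le C s^3$ on a fixed neighborhood $[0,\delta]$, which is immediate from the analyticity of $1/\Gamma$, and (b) a tail bound showing that $\int_\delta^\infty e^{-\alpha s}/\Gamma(s)\, ds$ is super-polynomially small in $\alpha$. For (b), the Stirling-type super-exponential growth of $\Gamma(s)$ at infinity makes $1/\Gamma(s)$ decay faster than any exponential, so the tail contribution is dwarfed by the interior contribution. Combining (a) and (b) yields the $O(\alpha^{-4})$ error in the first display and the analogous $O(\alpha^{-3})$ error in the second, which translate directly to the $O(1/u^2)$ corrections in the statement. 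The same analyticity/Stirling tail argument also supplies the absolute integrability needed for the Fubini step.
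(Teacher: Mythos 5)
Your proposal is correct, but it is worth noting that the paper does not prove this lemma at all: it is quoted verbatim as Proposition 1.6 of the cited work on the Dickman subordinator, so the paper's ``proof'' is a citation. What you supply is a self-contained derivation directly from the integral formula for $G_\theta$ given in the paper, and it checks out: the substitution $u=\log(1/t)$, the identity $s/\Gamma(s+1)=1/\Gamma(s)$, and Watson's lemma with $1/\Gamma(s)=s+\gamma s^2+O(s^3)$ give $G_\theta(t)=t^{-1}\bigl(\alpha^{-2}+2\gamma\alpha^{-3}+O(\alpha^{-4})\bigr)$ with $\alpha=u-\theta+\gamma$, and re-expanding in $1/u$ the $\gamma$-terms cancel exactly as you say, producing the coefficient $2\theta$; the integrated statement follows the same way after Tonelli (justified trivially by nonnegativity of the integrand) reduces $\int_0^t G_\theta(s)\,ds$ to $\int_0^\infty e^{-\alpha r}/\Gamma(r+1)\,dr$, where $1/\Gamma(r+1)=1+\gamma r+O(r^2)$ yields the coefficient $\theta$. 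Your error control is also sound, with one small simplification available: for the tail $\int_\delta^\infty e^{-\alpha s}/\Gamma(s)\,ds$ you do not even need Stirling decay at infinity beyond integrability, since $\Gamma(s)\ge\Gamma(s+1)\ge\min_{x\ge 1}\Gamma(x)>0$ for $s\in[\delta,1]$ and $\Gamma$ is bounded below on $[1,\infty)$, so $1/\Gamma$ is bounded on $[\delta,\infty)$ and the tail is $O(e^{-\alpha\delta}/\alpha)$, comfortably negligible. In short, your argument is a complete and essentially standard Laplace-asymptotics proof of a statement the paper imports as a black box, which is a reasonable thing to have, though for the purposes of this paper the citation suffices.
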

This is an apt point to mention that throughout the paper we will work with flat initial data.  
That is, in \eqref{flatdata} we chose the second coordinate in $\mathscr{Z}_t^\theta(\dd x, \dd y) := \mathscr{Z}_{0,t}^\theta(\dd x, \dd y) $ to be integrated over the entirety of $\R^2$ whereas the more general object would have been, for some test function $\psi,$
\begin{align}\label{flatdata1} 
\mathscr{Z}^\theta_{t}(\varphi, \psi):=\int_{\R^2\times \R^2}  \varphi(x)\psi(y) \mathscr{Z}_t^\theta(\dd x, \dd y).
\end{align}

Given the above setup, we are now in a position to present the key representation formula for the $h^{th}$-moment developed in \cite{shfuppermoment, shfcriticalmoment} (for the precise expression, the reader may also refer to the more recent \cite[Theorem 2.3]{shfnotgmc}). The expression, admittedly, is a bit unwieldy.   To help the reader gather better intuition, we will shortly explain how to interpret each term appearing in the expression.
Towards this, define  
\begin{align}\label{pairconv}
    \textsf{Pair}_h:= \{\{i,j\} : 1\le i<j\le h\},
\end{align}
i.e.  $|\textsf{Pair}_h| = {h \choose 2}.$ From now on, for every element $\{i,j\}$ in $    \textsf{Pair}_h$, we assume that $i<j.$
Then, 
\begin{align} \label{hformula}
2^h \cdot \E \Big[ &\big(\mathscr{Z}_t^\theta (\varphi)\big)^h\Big] =  \int_{(\R^2)^h}  \dd {\bz}  \varphi^{\otimes h}({\bz})    \Big\{ 1 +\!\!
       \sum_{m=1}^\infty
        (2\pi)^m
     \sumtwo{\{i_1,j_1\},...,\{i_m,j_m\} \in \textsf{Pair}_h}
    {\text{with $\{i_k,j_k\} \neq \{i_{k+1},j_{k+1}\}$ for $k=1,...,m-1$}}   \nonumber \\
    &        \iint\limits_{\substack{0\le a_1< b_1<...< a_m < b_m\le t \\ x_1,y_1,...,x_m, y_m \in  \R^2 }}     g_{\f{a_1}{2}} (x_1-z_{i_1})  g_{\f{a_1}{2}}  (x_1-z_{j_1})   
       \prod_{r=1}^m \Big[G_\theta (b_r-a_r)  g_{\frac{b_r-a_r}{4}} (y_r-x_r)\Big]   \nonumber \\
        & 	\cdot \prod_{r=1}^{m-1} \Big[g_{\frac{a_{r+1} - b_{\sfp(i_{r+1})}}{2}}(x_{r+1}-y_{\sfp(i_{r+1})}) g_{\frac{a_{r+1} - b_{\sfp(j_{r+1})}}{2}}(x_{r+1}-y_{\sfp(j_{r+1})})\Big]  \dd  \bx  \dd \by   \dd \boldsymbol{a} \dd \boldsymbol{b} \Big\},
\end{align} 
  where $$\varphi^{\otimes h}(\bz):=\varphi(z_1)\cdots \varphi(z_h),\qquad \bz = (z_1,\cdots,z_h) \in (\R^2)^h,$$
  and  for every pair $\{i_r,j_r\} \in \{1,\dots,h\}^2$ in the summation above, 
 \begin{align}\label{221}
     \sfp(i_r) :=  \max \{ 1\le k<r : i_r \in \{i_k,j_k\}\}, \qquad 2\le r\le m,
 \end{align} 
(set $\sfp(i_r):=0$ if the above set is empty). $\sfp(j_r)$ is similarly defined as well. Note that if $\sfp(i_r)=0$ (resp. $\sfp(j_r)=0$), then $(b_{\sfp(i_r)}, y_{\sfp(i_r)}) =(0,z_{i_r})$ (resp. $(b_{\sfp(j_r)}, y_{\sfp(j_r)}) =(0,z_{j_r})$). Also, for $r=1,$ we set $\sfp(i_{1})=\sfp(j_{1})=0$. 
Here $\sfp(\cdot)$ may be interpreted as denoting the parent in the natural directed structure that the diagram possesses (see Figure \ref{fig:feynmandiag} where the natural direction is forward where time increases).

\begin{figure}[hb]
    \centering
            \includegraphics[width= 4in]{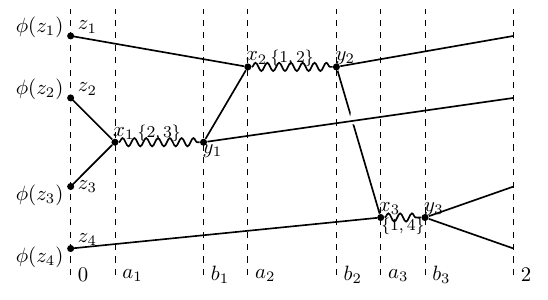}
    \caption{An illustration of a collision pattern in a Feynman diagram involved in the representation of the moment
formula \eqref{kernel} for $h=4$. In the diagram, the number of collisions is $m=3$ and there are $4$ Brownian motions starting from $z_1, z_2,z_3,z_4.$ The wiggle/curly lines
between points $(a_r, x_r)$ and $(b_r, y_r)$ are given weight $G_{\theta}(b_r-a_r)g_{\frac{b_r-a_r}{4}}(y_r-x_r)$ 
representing the total collision time of the tilted Brownian motion trajectories $B^{(i_r)}$ and $B^{(j_r)}$. Pairs $\{i_r, j_r\}$ above wiggle lines indicate the indices of the pair of Brownian motions involved in the collisions. 
Solid lines between points $(a_r, x_r)$ and $(b_{\mathsf{p}(i_r)}, y_{\mathsf{p}(i_r)})$ are weighted by the heat kernel $g_{\frac{a_r-b_{\mathsf{p}(i_r)}}{2}}(x_r-y_{\mathsf{p}(i_r)})$.
        }
        \label{fig:feynmandiag}
\end{figure}
~

A more compact expression involves clubbing together various terms in the above integrand into a kernel. To present this we first define the set of valid collision patterns.

\begin{definition}\label{colpat}
Let $h,m \in \mathbb N$. Define $\textsf{Col}^{(h,m)}$ to be the set of 
collision patterns $ \mathcal{I}= (\{i_1,j_1\},...,\{i_m,j_m\})$, where $\{i_1,j_1\},...,\{i_m,j_m\} \in \textsf{Pair}_h$ 
 with $\{i_k,j_k\} \neq \{i_{k+1},j_{k+1}\}$ for $k=1,...,m-1$.
\end{definition}
Note that
\begin{align}\label{col}
    |\textsf{Col}^{(h,m)}| = {h \choose 2} \Big[{h \choose 2} -1\Big]^{m-1}.
\end{align}

From the expression \eqref{hformula}, we obtain the following kernel integral formulation for the $h^{th}$ moment of the  Critical $2d$ SHF:
\begin{align} \label{hmoment}
    \E\Big[ \big( \mathscr{Z}_{t}^\theta(\varphi) \big)^h\Big]  =  \frac{1}{2^h} \int_{(\R^2)^h} \dd \bz    \varphi^{\otimes h}(\bz) 
 K_t^{(h)}(\bz) ,
\end{align}
where for $\bz = (z_1,\cdots,z_h)\in (\R^2)^h$, the kernel $ K_t^{(h)}(\bz) $ is defined as
\begin{align} \label{kernel}
  K_t^{(h)}(\bz) &:=  1 +\!\!
       \sum_{m=1}^\infty
        (2\pi)^m
     \sum_{(\{i_1,j_1\},...,\{i_m,j_m\}) \in \textsf{Col}^{(h,m)}}
    \nonumber \\
    &        \iint\limits_{\substack{0\le a_1< b_1<...< a_m < b_m\le t \\ x_1,y_1,...,x_m, y_m \in  \R^2 }}     g_{\f{a_1}{2}} (x_1-z_{i_1})  g_{\f{a_1}{2}}  (x_1-z_{j_1})   
       \prod_{r=1}^m \Big[G_\theta (b_r-a_r)  g_{\frac{b_r-a_r}{4}} (y_r-x_r)\Big]   \nonumber \\
        & 	\cdot \prod_{r=1}^{m-1} \Big[g_{\frac{a_{r+1} - b_{\sfp(i_{r+1})}}{2}}(x_{r+1}-y_{\sfp(i_{r+1})}) g_{\frac{a_{r+1} - b_{\sfp(j_{r+1})}}{2}}(x_{r+1}-y_{\sfp(j_{r+1})})\Big]  \dd  \bx  \dd \by   \dd \boldsymbol{a} \dd \boldsymbol{b} .
\end{align}

To interpret the above expression let us refer to Figure \ref{fig:feynmandiag}. As already alluded to, the moments of the SHF are given by Feynman diagrams encoding collision patterns of random walks whose initial data is given by the test function $\varphi.$ To see why this is the case, let us consider the polymer model and recall and examine the expression in  \eqref{polymer456} {with $(M,N) = (0,n)$}:
\begin{align}\label{polymer4567}
    Z_{0,n}^{\beta}(x,y) :=\E\Big[  \exp\big(\sum^{n-1}_{i=1}(\beta\omega(i,S_i)-\lambda(\beta))\Big) \1_{\{S_{n}=y\}}   |  S_0=x\Big].
\end{align}
The term $\lambda(\beta)=\frac{\beta^2}{2}$ appears to normalize the expectation (since the log-Laplace transform at $\beta$ of a standard Gaussian is $\frac{\beta^2}{2}$) leading to the first moment equality recorded in \eqref{moment1}.
Now the higher moments $\E\left[(Z_{0,n}^{\beta}(x,y))^h\right]$ will involve multiple random walks and them sharing the same noise. For instance, consider $h=2$, i.e., the second moment. Here two independent random walks intersect {(up to time $n$)} a random number of times, say, $\cI$. Each such collision gives rise to a term of the form 
$$\E[\exp\big(2\beta\omega(i,x)-2\lambda(\beta))]=\exp(\beta^{2})$$
where $(i,x)$ is the location of the collision.  Since each collision contributes the same, overall, one obtains, $\E(\exp(\beta^2 \cI))$. As is well known as the Erd\"os-Taylor theorem \cite{erdos}, $\cI$ is approximately distributed as an exponential variable {with parameter $\pi$} of scale $\log n$, leading to the choice of $\beta=\frac{\hat \beta}{\sqrt{\log n}}$ and $\hat \beta =\sqrt{\pi}$ being the critical location. 
For higher moments, the same principle holds and the contribution stems from various collision patterns of $h$ random walks as depicted in Figure \ref{fig:feynmandiag}. While, in principle, more than two random walks can collide simultaneously, it was already argued in \cite{shfcriticalmoment} that those represent a negligible fraction of the total moment and hence can be ignored in the scaling limit. This elucidates the pairwise interaction appearing in \eqref{kernel}.  
Thus we get 
\begin{equation}\label{pairwise}
\E\left[\exp \left(\beta^2 \sum_{1\le i < j \le h}\cI_{i,j}\right)\right],\end{equation}
where $\cI_{i,j}$ is the collision number of the random walks indexed $i$ and $j.$ An argument involving the Gaussian correlation inequality \cite{shfnotgmc} shows that the above is lower bounded by 
\begin{equation}\label{corrlb21}
\prod_{1\le i < j\le h }\E\left[\exp \left(\beta^2 \cI_{i,j}\right)\right].
\end{equation}
However, to get an exact expression in \eqref{kernel}, all possible number of pairwise interactions, $m$, is summed over. The collision times are $a_1<a_2<\ldots < a_m.$ The collision locations being $x_1, x_2 ,\ldots,x_m \in \R^2.$ The probability density of the collisions occurring at the above points account for the heat kernel term $g(\cdot)$ appearing in \eqref{kernel}. 
Finally, the source of the $G_{\theta}(\cdot)$ term is explained as follows.  Two random walks on the plane starting $\sqrt {n}$ apart and run for time $n$ are not very likely to meet. However it is a simple computation to check that the expected number of collisions is still $O(1).$ This is because the number of collisions have approximately the following heavy tailed distribution where it is $0$ with probability $1-\frac{1}{\log n}$ and $\log n$ with probability $\frac{1}{\log n}.$ Thus, on the event that the random walk collides, there are, unsurprisingly, many collisions in quick succession. The period of time witnessing the collisions is itself a random variable of scale $n$ which in \eqref{kernel} is given by the intervals $[a_r,b_r]$ with the first and last collision locations in this interval being $x_r, y_r$ respectively. The net contribution to the moment from this interval is $G_{\theta}(b_r-a_r).$ 

Finally, the $h$ independent random walk paths involved in the $h^{th}$ moment have their initial values essentially independently distributed according to density $\varphi$ which explains  the term $ \varphi^{\otimes h}(\bz).$  

\subsection{Key idea}\label{iop}
While the proof of Theorem \ref{main} is somewhat technical, let us nonetheless attempt to highlight the key idea. This may be summarized in one line as being able to capture the interactions between the spatial variables $x_{1},y_{1},\ldots , x_m,y_m\in \R^2.$ As already indicated, the previous best lower bound is a consequence of the Gaussian correlation inequality, and the upper bound arguments so far haven't fully dealt with the spatial aspect of the problem. The realization that this is unavoidable to obtain sharp bounds is the starting point of the paper. 

It turns out that the set of points $x_{1},y_{1},\ldots, x_m,y_m$ admits a rich spatial structure possessing key algebraic properties that we will take advantage of.  
We now briefly describe the latter. 
For any given Feynman collision diagram as in Figure \ref{fig:feynmandiag}, one may view it as a weighted graph $G=(V,E)$ with vertices $V=\{a_1,b_1, a_2,b_2,\ldots, a_m, b_m\}$ along with $h$ copies of $0$ acting as boundary vertices with the edge set being {$\{(a_r,b_r), (b_{\sfp(i_r)}, a_r),(b_{\sfp(j_r)}, a_r)\}_{1\le r\le m}$} with the notation $\sfp(\cdot)$ defined in \eqref{221}. Further, say the weight/conductance of an edge, $e,$  connecting $b_i$ and $a_j,$ is $c_e:=\frac{1}{|a_{j}-b_i|},$ i.e., the inverse of the distance between the points $a_j$ and $b_i$. The conductances for edges of the type $(a_i,b_i)$ need to be defined as $\frac{2}{|a_{i}-b_i|}.$ The spatial variables $x_{1}, y_{1},\ldots, x_m,y_m \in \R^2$ can now be viewed as the instance of a random function, say, $\Phi$ from $V\to \R^2$ such that for $1\le r \le m,$ we have $\Phi (a_r)=x_r$ and $\Phi(b_r)=y_r$ and the boundary condition at the copies of $0$ being given by $z_1,\ldots, z_h.$

At this point, fixing the weighted graph $G=(V,E)$, the dependence of the integrand in the expression \eqref{kernel} for the kernel $K_t^{(h)}$ on the function $\Phi$ can be seen to be 
\begin{equation}\label{gffenergy}
\exp\left(-\sum_{e\in E}{c_e\|\grad_{e}\Phi\|^2}\right),
\end{equation}
where $\grad_e$ denotes the discrete gradient along the edge $e$ and  $\|\cdot\|^2$ is the squared Euclidean norm.
The expert reader will notice that the above expression is exactly the density, up to constants, of a canonical Gaussian process, the Gaussian Free field (GFF) on the graph $G$ \cite{gff}.   
Given this, the kernel $K_t^{(h)}$ is obtained by integrating the above expression in \eqref{gffenergy} over the spatial variables, followed by `integrating' over all possible graphs, i.e., the vertex variables $\{a_1,b_1, a_2,b_2,\ldots, a_m, b_m\}$ as well as the choices of the edges $(a_{r}, b_{r}),(b_{\sfp(i_r)}, a_r),(b_{\sfp(j_r)}, a_r)$ for $1\le r\le m$. 
The expression in \eqref{gffenergy} allows us to use the rich algebraic structure that the GFF possesses. First, the integral of $\exp\left(-\sum_{e\in E}{c_e\|\grad_{e}\Phi\|^2}\right)$ over $\Phi$ is the partition function of the GFF, which is the square root of the determinant of Green's function of the random walk (one needs to enforce certain boundary conditions which we will not elaborate on in this discussion) (in fact our $\Phi$ is $\R^2$ valued and hence will correspond to two independent GFFs).  The latter is the inverse of the determinant of the corresponding weighted Laplacian. At this point a combinatorial fact comes to our aid: Kirchhoff's matrix-tree theorem. This says that the determinant of the Laplacian of a weighted graph is given by the count of weighted spanning trees on the same. This reduces the problem to estimating the latter which is how our argument proceeds.

Refraining from further discussions for the sake of avoiding technicalities, we end this section by mentioning that another important ingredient in our proof which is of independent interest is the following monotonicity result of the kernel $K_t^{(h)}(\bz)  $, defined in \eqref{kernel}, for $\bz = (z_1,z_2,\cdots,z_h) \in (\R^2)^h$. 

\begin{proposition}\label{monotone}
For any $(z_1,z_2,\cdots,z_h) \in (\R^2)^h$, the function $\alpha \mapsto K_t^{(h)}(\alpha z_1,\alpha z_2,\cdots,\alpha z_h)$ is non-increasing in $\alpha>0.$
\end{proposition}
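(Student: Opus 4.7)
The plan is to leverage the GFF structure identified in Section \ref{iop}. Fix $m \geq 1$, a collision pattern $\mathcal{I} = (\{i_1,j_1\},\ldots,\{i_m,j_m\}) \in \textsf{Col}^{(h,m)}$, and times $0 \leq a_1 < b_1 < \cdots < a_m < b_m \leq t$. Each heat kernel factor $g_\tau(u-v) = \frac{1}{2\pi\tau}\exp(-|u-v|^2/(2\tau))$ appearing in the integrand of \eqref{kernel} can be interpreted as an edge of conductance $c_e = 1/\tau$ in an auxiliary weighted graph $G = G_{m,\mathcal{I}}$ whose vertices are the boundary points $z_1,\ldots,z_h$ together with the interior collision points $x_1,y_1,\ldots,x_m,y_m$, with adjacencies prescribed by $\sfp(\cdot)$. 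Collecting these factors, the $\bz$- and $(\bx,\by)$-dependent part of the integrand equals $C(\ba,\bb)\cdot \exp\bigl(-\tfrac{1}{2}\langle \Phi,\mathcal{L}_G \Phi\rangle\bigr)$, where $\mathcal{L}_G$ is the weighted Laplacian of $G$, the function $\Phi:V(G) \to \R^2$ takes boundary values $\bz$ and interior values $(\bx,\by)$, and $C(\ba,\bb)$ gathers the $\bz$-independent normalizations $\prod_e (2\pi\tau_e)^{-1}$.

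Next I would integrate out the interior spatial variables. A standard Schur complement computation gives
\begin{equation*}
\int \exp\bigl(-\tfrac{1}{2}\langle \Phi,\mathcal{L}_G\Phi\rangle\bigr)\,\dd\bx\,\dd\by \;=\; C'(\ba,\bb)\cdot \exp\bigl(-\tfrac{1}{2}\,\bz^T S\, \bz\bigr),
\end{equation*}
where $C'$ is $\bz$-independent and $S$ is the Schur complement of $\mathcal{L}_G$ with respect to the interior block. Equivalently, $\tfrac{1}{2}\bz^T S\bz$ is the Dirichlet energy of the harmonic extension of $\bz$ into $G$; this is precisely where the domain Markov property of the GFF enters, decomposing the boundary-pinned GFF into its deterministic harmonic extension plus an independent zero-boundary GFF whose Gaussian integral contributes only to $C'$. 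In particular $S \succeq 0$, so $\bz^T S \bz \geq 0$ for every $\bz \in (\R^2)^h$, whence $\exp(-\tfrac{\alpha^2}{2}\bz^T S \bz)$ is non-increasing in $\alpha > 0$ under the scaling $\bz \mapsto \alpha\bz$.

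Finally, the remaining factors in each summand of \eqref{kernel}, namely $(2\pi)^m\prod_{r=1}^m G_\theta(b_r-a_r)$ and the product $C(\ba,\bb)C'(\ba,\bb)$, are $\bz$-independent and non-negative. The time integrals, the sum over $\textsf{Col}^{(h,m)}$, and the sum over $m \geq 1$ are all against non-negative measures, hence preserve monotonicity in $\alpha$. The $m=0$ contribution is the constant $1$ and is trivially invariant. This yields the proposition. The one potential technical subtlety is the invertibility of the interior block of $\mathcal{L}_G$, needed to justify the Schur complement step; this is immediate from the fact that every interior vertex of $G$ is connected by positive-conductance edges to at least one boundary vertex $z_i$, which follows directly from the $\sfp(\cdot)$ structure.
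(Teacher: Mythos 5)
Your proposal is correct and takes essentially the same route as the paper: both reduce to showing that, for a fixed weighted graph with boundary pinned at $\alpha\bz$, the Gaussian integral over interior vertices factorizes as an $\alpha$-independent constant times $\exp\bigl(-\alpha^2 \cdot [\text{Dirichlet energy of the harmonic extension of } \bz]\bigr)$, which is manifestly non-increasing in $\alpha>0$; the paper phrases this via the explicit change of variables $\widetilde\phi^{(\alpha)}_v = \phi^{(\alpha)}_v - \alpha\mathcal{H}_v$ plus discrete integration by parts, whereas you phrase it via the Schur complement, but these are the same decomposition. One small imprecision: the invertibility of the interior block does not come from each interior vertex being directly adjacent to some $z_i$ (indeed the $y_r$ vertices are not), but rather from connectedness of the whole graph together with non-emptiness of the boundary, so that a vector vanishing on $\partial V$ with zero Dirichlet energy must vanish everywhere; this is the argument the paper uses to establish positive definiteness of the reduced Laplacian in the proof of Lemma~\ref{key}.
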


The proof of the above proposition is another manifestation of the usefulness of the connection to GFF. The proof relies on the domain Markov property of the GFF, i.e., the distribution of $\Phi$ conditioned on the values on some vertices, say $v_{1}, \ldots v_k,$ interpreted as a boundary condition $\tau$, is given by the zero boundary GFF, i.e., where $\tau$ is set to $0$ plus the harmonic extension of $\tau$ on the remaining vertices.\\

Before proceeding further, let us outline the structure of the rest of the article next.  

\subsection{Organization of the article}
In Section \ref{sec3}, which contains a majority of the key ideas treats the case where the test function is the Gaussian heat kernel. Here the higher moment formula simplifies via the Chapman–Kolmogorov equation, and the connection to the  Gaussian Free Field is developed and used via the Matrix-Tree theorem. In Section \ref{sec4}, we prove the monotonicity result  Proposition \ref{monotone} as a consequence of the domain Markov property of the Gaussian Free Field. Section \ref{sec5} completes the proof of Theorem \ref{main} by extending our moment bounds from Gaussian to compactly supported test functions using a comparison result based on the monotonicity of the kernel. Finally, in Section \ref{sec6}, as a consequence of  Theorem \ref{main}, we derive sharp tail bounds, Theorem \ref{tail}, for the Critical  $2d$ SHF.

\section{Moments for Gaussian initial data} \label{sec3}
With the above preparation, we are now are in a position to dive into the proof of Theorem \ref{main}. However, as we will see, it will be convenient to initially start with a Gaussian test function instead of a generic $\varphi.$ This will be equivalent to considering the GFF on an augmented graph $\wh G$ obtained from $G$ by adding an extra vertex as illustrated in Figure \ref{fig:feynmandiag2}. 
Also for notational simplicity let us fix $t=1$ which will allow us to suppress the $t$ dependence from the notation. The same argument will continue to work for any general $t>0$ (see Remark \ref{scaling2} for the explanations).
Thus for the moment $\varphi=g_1$ where the latter is the heat kernel $g_1(x)=\frac{1}{2\pi}e^{-|x|^2/2},\ \forall x\in \R^2$ defined in \eqref{heatkernel1} and our object of interest is
 \begin{align} \label{heatkernel}
 \E\Big[ \big( \mathscr{Z}_{1}^\theta(g_1) \big)^h\Big]. 
 \end{align} 
 \begin{figure}[hb]
    \centering
            \includegraphics[width= 5in]{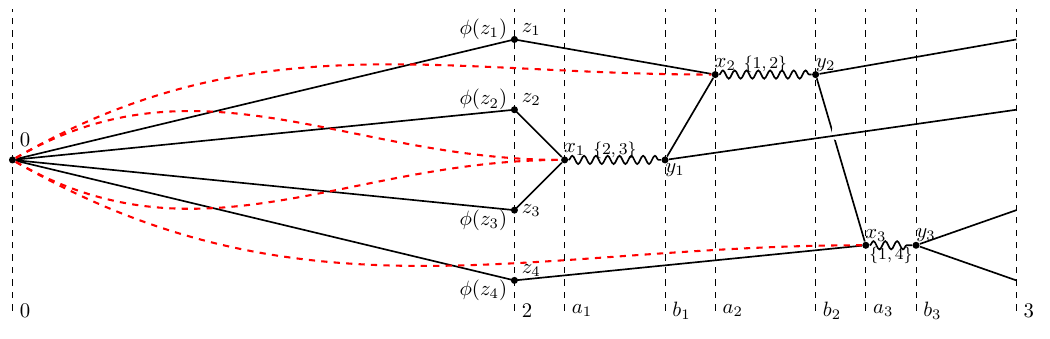}
    \caption{Illustration of the augmented graph $\wh G$ obtained from the graph $G$ in Figure \ref{fig:feynmandiag} by adding the auxiliary vertex $0$ and shifting the remaining coordinates by $2.$
 For our purposes, it will also be useful to consider the red edges which essentially replace the paths of length two passing through the points $z_1, \ldots, z_h$ by edges connecting their endpoints.         }
        \label{fig:feynmandiag2}
\end{figure}
The following is a version of Theorem \ref{main} in this case.
\begin{proposition}\label{heat}
There exists an absolute constant $c_1>0$ such that the following holds. For any  $\theta\in \R$, for sufficiently large $h$,
\begin{align} 
 \E\Big[ \big( \mathscr{Z}_{1}^\theta(g_1) \big)^h\Big]   \ge  \exp (e^{c_1h }).
\end{align}
\end{proposition}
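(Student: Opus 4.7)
The plan is to start from the kernel representation \eqref{hmoment} with $\varphi = g_1$ and exploit the Gaussian structure throughout. As a first step I would use the Chapman--Kolmogorov identity $\int_{\R^2} g_1(z)\, g_{a/2}(x-z)\, \dd z = g_{1+a/2}(x)$ to integrate out each initial point $z_i$, which in \eqref{kernel} appears only in the heat kernel joining it to the first spatial collision point involving walk $i$. This converts the $h$ initial integrals into heat kernels emanating from a common auxiliary origin $0$, and identifies the resulting expression with a Feynman integral supported on the augmented graph $\widehat G$ depicted in Figure \ref{fig:feynmandiag2}, with the heat kernel prefactors absorbed into edge weights of the red edges.

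The second step implements the observation from Section \ref{iop}. After fixing a collision pattern $\cI \in \textsf{Col}^{(h,m)}$ and the time variables $(a_r, b_r)_{1 \le r \le m}$, the product of two-dimensional heat kernels in \eqref{kernel} is precisely, up to explicit heat-kernel normalizations, the unnormalized density of the two-dimensional Gaussian Free Field on $\widehat G$, with conductance on each edge $e$ proportional to the inverse of the associated time increment and with Dirichlet boundary value at the auxiliary vertex. Integrating out the interior spatial variables $x_r, y_r$ then yields, up to a $(2\pi)^{|V_{\text{int}}|}$ factor, the $\R^2$-valued GFF partition function $1/\det L_{\widehat G}$, where $L_{\widehat G}$ is the weighted Laplacian of $\widehat G$.

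Step three invokes Kirchhoff's matrix-tree theorem, which rewrites $\det L_{\widehat G}$ as a weighted sum over spanning trees of $\widehat G$ rooted at the auxiliary vertex. This reduces the entire spatial contribution of each Feynman diagram to a purely combinatorial sum that I would lower bound by restricting to a natural family of trees compatible with the chronological structure of the diagram, typically a tree following the edges $(b_{\sfp(i_r)}, a_r)$ and $(a_r, b_r)$ in time. At this point I would also restrict the time integrations to a convenient region---most naturally, a nested sequence of exponentially shrinking windows confining the $m$ collision clusters---so that each factor $G_\theta(b_r - a_r)$ benefits from the blow-up $G_\theta(t) \asymp 1/(t \log^2(1/t))$ in Lemma \ref{lemma asym}, while the spanning-tree weights remain analytically controllable.

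The remaining task, and the main obstacle, is to assemble these ingredients into a bound of the shape $\exp(e^{c_1 h})$. The combinatorial input is the count $|\textsf{Col}^{(h,m)}| \sim \binom{h}{2}^m$ from \eqref{col}: summing the per-diagram lower bound over all $\cI$ compatible with the chosen time template transfers a factor of order $\binom{h}{2}^m$ into the moment. The plan is to choose $m$ growing with $h$ and balance this combinatorial gain against the analytic cost coming from $\det L_{\widehat G}$, which by Kirchhoff grows roughly like a product of inverse time increments, so that the two effects cooperate to yield the predicted double-exponential growth. This delicate balance between the combinatorial explosion from pair labellings and the analytic cost of the spanning-tree sum is exactly where the GFF/matrix-tree perspective should pay off over the Gaussian correlation inequality producing \eqref{fkg12}, since it gives global control of the full spatial integral rather than estimating heat-kernel paths one at a time.
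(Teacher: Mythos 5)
Your overall architecture --- Chapman--Kolmogorov to absorb the initial data, identification of the spatial integral with the GFF partition function on the augmented graph $\widehat G$, Kirchhoff's matrix-tree theorem, and trading $|\textsf{Col}^{(h,m)}|\sim h^{2m}$ against the analytic cost with $m$ exponential in $h$ --- coincides with the paper's. But your step three contains a directional error that would sink the argument. By Lemma~\ref{key}, the spatial integral for a fixed Feynman diagram equals $(2\pi)^{|V|-1}\bigl(\sum_{T\in\mathcal T(\widehat G)}c_T\bigr)^{-1}$, so a \emph{lower} bound on the moment requires an \emph{upper} bound on the weighted spanning-tree sum. Your plan to ``lower bound [the combinatorial sum] by restricting to a natural family of trees'' runs in the wrong direction: discarding trees can only make $\sum_T c_T$ smaller, hence $\bigl(\sum_T c_T\bigr)^{-1}$ larger, which is an upper bound on the spatial integral, not a lower bound. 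What the paper actually proves (Lemma~\ref{treebound}) is the complementary estimate $\sum_T c_T \le 3^{2m}\prod_{r=1}^m\tfrac{4}{b_r-a_r}\prod_{k=\eta-m+1}^{\eta}\tfrac{2}{\ell^{(\mathcal I)}_k}$, where $3^{2m}$ comes from the bounded-degree spanning-tree count of Lemma~\ref{graph} (every vertex of $\widehat G$ other than $0$ has degree at most $3$) and the conductance product uses the ordering built into \eqref{conductprof}.

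Two further ingredients are missing even granting the corrected direction. The paper's time template \eqref{a}--\eqref{b} places the $m$ collision clusters in \emph{equally spaced} windows of width $\asymp 1/m$ inside $[2,3]$, chosen precisely so that \eqref{dominance}--\eqref{dominance2} hold: the curly-edge lengths are shorter than all non-curly edge lengths, so the $2m$ largest conductances are identified and the spanning-tree sum stays controllable. Your proposed exponentially shrinking nested windows would destroy this ordering and inflate the spanning-tree sum faster than the combinatorial gain. More importantly, the product of the surviving non-curly conductances must be controlled, and the paper does this via the AM--GM bound of Lemma~\ref{short}, which bounds the total jump length $\sum(r-\sfp(i_r))$ by $mh$ and caps the relevant conductance product at $(2h)^{O(m)}$. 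You gesture at the spanning-tree weights ``remaining analytically controllable,'' but without this specific input the final balance at $m\asymp e^{ch}$ does not close.
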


The convenience of using $g_1$ as the test function now becomes apparent since using \eqref{hmoment} and the Chapman–Kolmogorov forward equation for the heat kernel, i.e., 
 \begin{align*}
  \int_{\R^2} g_{1}(z_\ell) \, g_{\frac{a_{r}}{2}}(x_{r}-z_\ell) \, \dd z_\ell 
  =g_{\frac{2+a_{r}}{2}}(x_{r}),
  \end{align*}
by shifting the time variables by $2$ the following equality holds.
\begin{align}
2^h \cdot \E\Big[ \big( &\mathscr{Z}_{1}^\theta(g_1) \big)^h\Big] =  1 +\!\!
       \sum_{m=1}^\infty
        (2\pi)^m
     \sum_{(\{i_1,j_1\},...,\{i_m,j_m\}) \in \textsf{Col}^{(h,m)}}
      \nonumber \\
    &        \iint\limits_{\substack{2\le a_1< b_1<...< a_m < b_m\le 3 \\ x_1,y_1,...,x_m, y_m \in  \R^2 }}     g_{\f{a_1}{2}} (x_1)^2 
       \prod_{r=1}^m \Big[G_\theta (b_r-a_r)  g_{\frac{b_r-a_r}{4}} (y_r-x_r)\Big]   \nonumber \\
        & 	\cdot \prod_{r=1}^{m-1} \Big[g_{\frac{a_{r+1} - b_{\sfp(i_{r+1})}}{2}}(x_{r+1}-y_{\sfp(i_{r+1})}) g_{\frac{a_{r+1} - b_{\sfp(j_{r+1})}}{2}}(x_{r+1}-y_{\sfp(j_{r+1})})\Big]  \dd \bx \dd \by   \dd \boldsymbol{a} \dd \boldsymbol{b}.
        \label{integrand}
\end{align} 
 We note that if $\sfp(i_r)=0$ (resp. $\sfp(j_r)=0$), then $(b_{\sfp(i_r)}, y_{\sfp(i_r)}) =(0,0)$ (resp. $(b_{\sfp(j_r)}, y_{\sfp(j_r)}) =(0,0)$).
Thus the main difference between \eqref{kernel} and the above is the term $g_{\f{a_1}{2}} (x_1)^2$ obtained as a consequence of the Chapman-Kolmogorov equation.

Given this formulation, we see below how given a positive integer $m$, a collision pattern $ \mathcal{I}= (\{i_1,j_1\},...,\{i_m,j_m\}) \in \textsf{Col}^{(h,m)}$, and times $2\le a_1< b_1<...< a_m < b_m\le 3$, i.e., the weighted graph $G=(V,E),$  the above integral with respect to the spatial variables  $x_1,y_1,...,x_m, y_m \in  \R^2$ is written as the partition function of the GFF with respect to an augmented graph $\wh G.$ 
The latter has an extra vertex $0$ connected to the vertices corresponding to the variables $z_1,\ldots, z_h$ (note however that the effect of applying the Kolmogorov-Chapman equation is to simply remove the latter vertices and consider the red edges bypassing them as shown in Figure \ref{fig:feynmandiag2}). 
 
Recalling that $g_t(x):=\frac{1}{2\pi t}e^{-\frac{|x|^2}{2t}}$, the spatial part of the integral in \eqref{integrand} has the following form.

\begin{align} \label{2}
  &  \iint\limits_{\substack{x_1,y_1,...,x_m, y_m \in  \R^2 }}    g_{\frac{a_1}{2} } (x_1)^2  \prod_{r=1}^{m}   \Big[  G_\theta (b_r-a_r) g_{\frac{b_r-a_r}{4}} (y_r-x_r)\Big]  \nonumber \\
  &\qquad \qquad \cdot \prod_{r=1}^{m-1}  \Big[g_{\frac{a_{r+1} - b_{\sfp(i_{r+1})}}{2}}(x_{r+1}-y_{\sfp(i_{r+1})}) g_{\frac{a_{r+1} - b_{\sfp(j_{r+1})}}{2}}(x_{r+1}-y_{\sfp(j_{r+1})})\Big]\dd \bx \dd \by  \nonumber \\
   &= \Big (  \frac{1}{\pi a_1} \Big)^2 \prod_{r=1}^{m}     G_\theta (b_r-a_r) \cdot \prod_{r=1}^{m} \frac{1}{2\pi \cdot  \frac{b_r-a_r}{4}}  \cdot \prod_{r=1}^{m-1} \Big[\frac{1}{ \pi   (  {a_{r+1} - b_{\sfp(i_{r+1})}} )  }   \cdot  \frac{1}{\pi   ({a_{r+1} - b_{\sfp(j_{r+1})}}) } \Big]\nonumber \\
   &\qquad \cdot 
\iint\limits_{x_1,y_1,\cdots,x_m,y_m\in \R^2} \exp \Big ( -\frac{|x_1|^2}{a_1}-\frac{|x_1|^2}{a_1}  - \sum_{r=1}^m \frac{|y_r-x_r|^2 }{(b_r-a_r)/2}  \nonumber \\
   &\qquad \qquad\qquad\qquad -\sum_{r=1}^{m-1} \frac{|x_{r+1}-y_{\sfp(i_{r+1})}| ^2}{  a_{r+1} - b_{\sfp(i_{r+1})}} -  \sum_{r=1}^{m-1}\frac{|x_{r+1}-y_{\sfp(j_{r+1})}| ^2}{  a_{r+1} - b_{\sfp(j_{r+1})}}\Big)\dd \bx \dd \by .
\end{align}
The integrand now is of the kind in \eqref{gffenergy}.

\subsection{GFF, weighted Laplacians and the Matrix-Tree theorem}
We now arrive at the already alluded to expression for the partition function of the GFF in terms of weighted spanning trees.

\begin{lemma}\label{key}
 Consider a finite  (weighted)  connected graph \( H=(V,E,c)\) with an arbitrary designated pinning vertex \(o\in V\). For any (undirected)  edge $\{u,v\}\in E$, let $c_{uv}=c_{vu}>0$ be its conductance. Then, setting $\phi_o:=(0,0) \in \R^2$, we have 
    \begin{align} \label{lemmaeq}
       \int_{(\R^2)^{V \setminus \{o\} }   } \exp\Big(-\frac{1}{2}\sum_{\{u,v\}\in E} c_{uv}|\phi_u - \phi_v|^2\Big)  \prod_{v\in V \setminus \{o\} } \dd \phi_v = (2\pi)^{|V|-1}  \Big (\sum_{T \in \mathcal T(H)} c_T \Big )^{-1},
    \end{align}
    where  \(\mathcal{T}(H)\) denotes the set of all spanning trees of \(H\) and $c_T := \prod_{e\in E(T)} c_e.$
\end{lemma}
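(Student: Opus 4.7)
The plan is to recognize the exponent as the Dirichlet energy of a two-dimensional Gaussian Free Field on $H$ pinned to zero at $o$, apply the standard multivariate Gaussian integration formula, and then invoke Kirchhoff's Matrix-Tree theorem to rewrite the resulting determinant as a sum over spanning trees.

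First, I would decouple the two coordinates of $\phi_v = (\phi_v^{(1)}, \phi_v^{(2)}) \in \R^2$. Since $|\phi_u - \phi_v|^2 = (\phi_u^{(1)}-\phi_v^{(1)})^2 + (\phi_u^{(2)}-\phi_v^{(2)})^2$, the integrand on the left of \eqref{lemmaeq} factors as a product of two identical scalar Gaussian integrals, so it suffices to evaluate the scalar version $I := \int_{\R^{V\setminus\{o\}}} \exp\bigl(-\tfrac{1}{2}\sum_{\{u,v\}\in E} c_{uv}(\phi_u-\phi_v)^2\bigr)\prod_{v\neq o} d\phi_v$ and square the result.

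Next I would identify the quadratic form. Let $L$ denote the weighted Laplacian of $H$, defined by $L_{uu} = \sum_{v\sim u} c_{uv}$ and $L_{uv} = -c_{uv}$ for adjacent $u \neq v$, so that $\sum_{\{u,v\}\in E} c_{uv}(\phi_u - \phi_v)^2 = \phi^{T} L \phi$. Imposing $\phi_o = 0$ reduces this to $\tilde\phi^{T} L_o \tilde\phi$, where $L_o$ is the principal submatrix of $L$ obtained by deleting the row and column indexed by $o$, and $\tilde\phi$ is the restriction of $\phi$ to $V\setminus\{o\}$. Connectedness of $H$ guarantees that $L_o$ is positive definite (the null space of $L$ is spanned by the all-ones vector), so the standard Gaussian integration formula gives $I = (2\pi)^{(|V|-1)/2} (\det L_o)^{-1/2}$, and squaring yields $I^{2} = (2\pi)^{|V|-1}(\det L_o)^{-1}$.

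The final ingredient is Kirchhoff's Matrix-Tree theorem in its weighted form, which asserts $\det L_o = \sum_{T \in \mathcal{T}(H)} c_T$ independently of the choice of pinning vertex $o$. Substituting this back delivers the right-hand side of \eqref{lemmaeq} exactly. There is no serious obstacle here, as every ingredient is textbook; the only points requiring a moment of care are the positive-definiteness of $L_o$ (from connectedness) and the correctly weighted statement of the Matrix-Tree theorem, both standard.
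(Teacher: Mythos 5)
Your proposal is correct and follows essentially the same route as the paper's proof: factor the $\R^2$-valued field into two identical scalar Gaussian integrals, identify the exponent with the quadratic form of the reduced (pinned) Laplacian $L^{(o)}$, use connectedness for positive definiteness, evaluate the Gaussian integral as $(2\pi)^{(|V|-1)/2}(\det L^{(o)})^{-1/2}$ per coordinate, and conclude with the weighted Matrix-Tree theorem. No gaps.
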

The LHS \eqref{lemmaeq} is the partition function of the Gaussian Free Field on the weighted graph $H=(V,E,c)$ where $c$ is the conductance vector and where the GFF is pinned at $o$ to be $(0,0).$

~

Lemma \ref{key} is obtained as a consequence of the Gaussian integration along with the celebrated Kirchhoff's Matrix-Tree Theorem (see \cite{matrixtree}) which we state next.
For a finite connected (weighted) graph \(H=(V,E)\), the diagonal matrix \(D = (D_{uu})_{u\in V}\) is defined as:
\[
D_{uu} := \sum_{\substack{v\in V \\ \{u,v\} \in E}} c_{uv}, \qquad \forall u \in V,
\]
meaning that each diagonal entry \(D_{uu}\) is the sum of the conductances of all edges incident to vertex \(u\in V\). Next,
the adjacency matrix \(A=(A_{uv} )_{u,v\in V}\) is defined by
\[
A_{uv} := 
\begin{cases} 
c_{uv}, & \text{if } \{u,v\} \in E, \\
0, & \text{otherwise.}
\end{cases}
\]
The (un-normalized) Laplacian matrix is then defined as 
\begin{equation}\label{laplacian}
L :=D-A.
\end{equation}

\begin{lemma}\label{matrixtree}
    Let $L$  be the Laplacian matrix of the (weighted) {connected} graph $H = (V,E, c)$.  Then, for any fixed vertex \(o \in V\), the determinant of the \((|V|-1)\times (|V|-1)\) matrix \(L^{(o)}\) obtained by deleting the row and column of \(L\) corresponding to $o$ is equal to the number of (weighted) spanning trees   of \(H\), i.e.,
\[
\det(L^{(o)}) = \sum_{T \in \mathcal{T}(H)} c_T,
\]
where \(\mathcal{T}(H)\) denotes the set of all spanning trees of \(H\) and $c_T := \prod_{e\in E(T)} c_e.$
\end{lemma}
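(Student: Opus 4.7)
The plan is to prove Kirchhoff's Matrix-Tree theorem via the Cauchy-Binet formula applied to the (signed, weighted) vertex-edge incidence matrix of $H$. First I fix an arbitrary orientation of each edge, so that every $e=\{u,v\}\in E$ has a designated head and a designated tail, and define the $|V|\times|E|$ matrix $B$ by $B_{v,e}=\sqrt{c_e}$ if $v$ is the head of $e$, $B_{v,e}=-\sqrt{c_e}$ if $v$ is the tail of $e$, and $B_{v,e}=0$ otherwise. A direct calculation then yields $L=BB^{T}$: the diagonal entry is $(BB^{T})_{uu}=\sum_{e\ni u}c_e=D_{uu}$, while for $u\neq v$ only the unique edge $\{u,v\}$ (if present) contributes to $(BB^{T})_{uv}$, producing $B_{u,e}B_{v,e}=-c_{uv}$ since one endpoint is the head and the other the tail; this matches the off-diagonal entry $-A_{uv}$ of $L=D-A$. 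Letting $B^{(o)}$ denote $B$ with the row indexed by $o$ removed, we then have $L^{(o)}=B^{(o)}(B^{(o)})^{T}$.

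Applying the Cauchy-Binet formula to the $(|V|-1)\times|E|$ matrix $B^{(o)}$ gives
\[
\det\bigl(L^{(o)}\bigr)=\sum_{\substack{S\subseteq E\\|S|=|V|-1}}\det\bigl(B^{(o)}_S\bigr)^{2},
\]
where $B^{(o)}_S$ denotes the $(|V|-1)\times(|V|-1)$ submatrix formed by the columns of $B^{(o)}$ indexed by $S$. It therefore suffices to identify $\det(B^{(o)}_S)^{2}$ with $c_S:=\prod_{e\in S}c_e$ when $(V,S)$ is a spanning tree of $H$, and to show $\det(B^{(o)}_S)=0$ in every other case. If $(V,S)$ contains a cycle, then summing the columns of $B$ indexed by the cycle edges, weighted by signs dictated by a consistent cyclic traversal, produces the zero vector; hence the columns of $B^{(o)}_S$ are linearly dependent and $\det(B^{(o)}_S)=0$. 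If instead $S$ is acyclic with $|S|=|V|-1$, then $(V,S)$ must be a spanning tree, and I evaluate $\det(B^{(o)}_S)$ by iterated leaf peeling: every finite tree has at least one leaf $v\neq o$ incident to a single edge $e\in S$, so the row of $B^{(o)}_S$ indexed by $v$ contains exactly one nonzero entry $\pm\sqrt{c_e}$. Laplace expansion along this row removes both $v$ and $e$ and reduces the problem to the analogous determinant on the smaller spanning tree $(V\setminus\{v\},S\setminus\{e\})$ pinned at $o$. Iterating down to a single vertex yields $|\det(B^{(o)}_S)|=\prod_{e\in S}\sqrt{c_e}$, so $\det(B^{(o)}_S)^{2}=c_S$.

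Assembling these two cases into the Cauchy-Binet expansion delivers the identity $\det(L^{(o)})=\sum_{T\in\mathcal{T}(H)}c_T$, completing the proof. The essential inputs are the factorization $L=BB^{T}$ and the Cauchy-Binet formula; beyond these, no serious obstacle arises, since the ambiguous $\pm$ signs introduced by the edge orientations are neutralized by squaring, and the leaf-peeling induction is entirely mechanical. The only point that must be handled with care is the cycle-vanishing step, which relies on the combinatorial observation that the columns of the incidence matrix along any cycle satisfy an explicit linear relation.
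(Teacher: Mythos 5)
The paper does not prove Lemma~\ref{matrixtree}: it is quoted as Kirchhoff's classical Matrix-Tree Theorem with a citation, so there is no in-paper argument to compare against. Your proof via the factorization $L=BB^{T}$ of a (signed, weighted) incidence matrix and the Cauchy--Binet formula is the standard textbook proof of the weighted version, and it is correct. One small imprecision worth fixing in the cycle-vanishing step: the columns of $B$ along a cycle are not annihilated by a linear combination with coefficients $\pm1$ chosen by traversal sign alone, because each column carries a factor $\sqrt{c_e}$; the vanishing combination needs coefficients $\pm 1/\sqrt{c_e}$, with the sign determined by how the chosen orientation of $e$ compares to the traversal direction. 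The conclusion that the columns of $B^{(o)}_S$ are linearly dependent (hence $\det B^{(o)}_S=0$) is of course unaffected, so the overall argument stands.
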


Given the above input, we now prove Lemma \ref{key}.
\begin{proof}[Proof of Lemma \ref{key}]
   As every $\phi_v$ takes values in $\R^2, $  we write $\phi_v = (\phi_{v,\mathsf{x}},\phi_{v,\mathsf{y}}) \in \R^2.$ Let $$\Phi_\mathsf{x} : = (\phi_{v,\mathsf{x}})_{v\in V} \in \R^{V} \text{ and } \Phi_\mathsf{y} : = (\phi_{v,\mathsf{y}})_{v\in V} \in \R^{V}.$$
Observe that the exponent in the Gaussian integral in the LHS in \eqref{lemmaeq} is the quadratic form induced by $L$, i.e.,
\begin{align} \label{213}
   \frac{1}{2}  \sum_{\{u,v\}\in E} \ c_{uv} |\phi_u - \phi_v|^2 & =  \frac{1}{2} \sum_{\{u,v\}\in E} \ c_{uv} (\phi_{u,\mathsf{x}} - \phi_{v,\mathsf{x}} )^2 + \frac{1}{2} \sum_{\{u,v\}\in E} \ c_{uv} (\phi_{u,\mathsf{y}} - \phi_{v,\mathsf{y}} )^2  \nonumber \\
   &= \frac{1}{2} \Phi_\mathsf{x}^T L \Phi_\mathsf{x} + \frac{1}{2} \Phi_\mathsf{y}^T L \Phi_\mathsf{y}  .
\end{align}
Recalling that the field is pinned at vertex \(o\) (i.e.,  \(\phi_o=(0,0)\)), the integration is carried out over the remaining vertices \(V\setminus\{o\}\), and thus the corresponding Laplacian is the reduced Laplacian \(L^{(o)}\) (i.e., obtained by deleting the row and column corresponding to \(o\)). Hence defining
$$\widetilde \Phi_\mathsf{x} : = (\phi_{v,\mathsf{x}})_{v\in V \setminus \{o\} } \in \R^{V\setminus \{o\} } \text{ and }\widetilde \Phi_\mathsf{y} : = (\phi_{v,\mathsf{y}})_{v\in V\setminus \{o\} } \in \R^{V\setminus \{o\} },$$
the quantity \eqref{213} is written as
\begin{align*}
    \frac{1}{2} \widetilde\Phi_\mathsf{x}^T L ^{(o)}\widetilde\Phi_\mathsf{x} + \frac{1}{2} \widetilde\Phi_\mathsf{y}^T L ^{(o)}\widetilde\Phi_\mathsf{y} .
\end{align*}
Now a standard fact about Gaussian integrals yields
\begin{align*}
\int_{\mathbb{R}^{V\setminus\{o\}}} \exp\left(-\frac{1}{2} \widetilde\Phi_\mathsf{x}^T L^{(o)}\widetilde\Phi_\mathsf{x} \right) \prod_{v\in V \setminus \{o\} } \dd \phi_{v,\mathsf{x}}& = \int_{\mathbb{R}^{V\setminus\{o\}}} \exp\left(-\frac{1}{2} \widetilde\Phi_\mathsf{y}^T L^{(o)}\widetilde\Phi_\mathsf{y} \right) \prod_{v\in V \setminus \{o\} } \dd \phi_{v,\mathsf{y}} \\
&= (2\pi)^{\frac{|V|-1}{2}}  (\det L^{(o)})^{-1/2}.
\end{align*}
{Note that \(L^{(o)}\) is positive definite, so the above Gaussian integral is well-defined. Indeed, for any vector \(\mathbf{w}=(w_v)_{v\in V\setminus\{o\}}  \in \R^{V\setminus \{o\}}  \) ,
\[
\mathbf{w}^T L^{(o)} \mathbf{w}
=\sum_{\{u,v\}\in E}c_{uv}\,(w_u - w_v)^2
\]
(the ``pinned'' coordinate \(w_o\) is fixed to zero).
Since \(H\) is connected and $c_{uv}=c_{vu}>0$, this quadratic form vanishes only when all \(w_v\), including $w_o$, are equal. As \(w_o=0\), the only admissible solution is \(\mathbf{w}=\textbf{0}\), proving positive definiteness.}
Thus the LHS of \eqref{lemmaeq} is equal to
\begin{align*}
    (2\pi)^{|V|-1}  (\det L^{(o)})^{-1}.
\end{align*}
 Lemma \ref{matrixtree} now finishes the proof.

\end{proof}

\subsection{Counting weighted spanning trees} To prove Proposition \ref{heat}, with the aid of Lemma \ref{key},
we aim to lower bound the quantity in \eqref{2}.  To apply the former, we have to first fix a realization of the weighted graph $G,$ which amounts to fixing the time slices $2\le a_1< b_1<...< a_m < b_m\le 3$ as well as the collision pattern  $ \mathcal{I}= (\{i_1,j_1\},...,\{i_m,j_m\}) \in \textsf{Col}^{(h,m)}$ (see Figure \ref{fig:feynmandiag2}).\\

We will restrict to the case where the time slices $2\le a_1< b_1<...< a_m < b_m\le 3$ satisfy for every $i=1,2,\cdots,m,$
\begin{align} \label{a}
    a_i \in \Big[2+ \frac{2i-1}{2m}  - \frac{1}{10m}, 2+ \frac{2i-1}{2m}  + \frac{1}{10m}\Big],
\end{align} 
and,
\begin{align} \label{b}
    b_i \in \Big[a_i, 2+ \frac{2i}{2m}  - \frac{1}{5m}\Big].
\end{align} 
These conditions ensure that for any collision pattern  $ \mathcal{I}= (\{i_1,j_1\},...,\{i_m,j_m\}) \in \textsf{Col}^{(h,m)}$ and $1\le r\le m-1$,
{\begin{align}\label{dominance}
{  0\le b_r-a_r  \le \min_{1\le r'\le m-1} \{ a_{r'+1} - b_{\sfp(i_{r'+1})},a_{r'+1} - b_{\sfp(j_{r'+1})}\}}
\end{align}}
and
{\begin{align}\label{dominance2}
     0\le  b_r-a_r \le \frac{2}{5m} \le  \frac{1}{5}a_1.
\end{align}}
Note that \eqref{dominance} in particular implies that all the `curly' edges in the graph, as in Figure \ref{fig:feynmandiag2}, have lengths less than all the `non-curly' edges. 
We next, under the conditions \eqref{a} and \eqref{b}, lower bound the integral appearing in \eqref{2}, i.e.,
\begin{align} \label{215}
&\iint\limits_{x_1,y_1,\cdots,x_m,y_m\in \R^2} \exp \Big (-\frac{ |x_1|^2}{a_1}   -\frac{ |x_1|^2}{a_1}  - \sum_{r=1}^m \frac{|y_r-x_r|^2 }{(b_r-a_r)/2}  \nonumber \\
   &\qquad \qquad\qquad\qquad -\sum_{r=1}^{m-1} \frac{|x_{r+1}-y_{\sfp(i_{r+1})}| ^2}{  a_{r+1} - b_{\sfp(i_{r+1})}} -  \sum_{r=1}^{m-1}\frac{|x_{r+1}-y_{\sfp(j_{r+1})}| ^2}{  a_{r+1} - b_{\sfp(j_{r+1})}}  \Big) \dd \bx \dd \by .
\end{align} 
Recalling the definitions from \eqref{221}, the Gaussian integral \eqref{215} can be written as
\begin{align} \label{gauss}
\iint\limits_{x_1,y_1,\cdots,x_m,y_m\in \R^2} \exp \Big (-\sum_{r=1}^m \frac{|y_r-x_r|^2 }{(b_r-a_r)/2} -\sum_{r=1}^{m} \frac{|x_r-y_{\sfp(i_r)}| ^2}{  a_r - b_{\sfp(i_r)}} -  \sum_{r=1}^{m}\frac{|x_r-y_{\sfp(j_r)}| ^2}{  a_r - b_{\sfp(j_r)}}  \Big) \dd \bx \dd \by .
\end{align}
We define a partition of the time index set $\{1,2,\dots,m\}$ as 
\begin{align} \label{partition}
    \mathcal A:= \{1\le r\le m:  \sfp(i_{r}) = \sfp(j_{r})\} , \qquad \mathcal B:= \{1,2,\dots,m\} \setminus \mathcal A.
\end{align}
For instance, as $\sfp(i_{1}) = \sfp(j_{1}) = 0$, we have $1\in \mathcal A.$ Next consider the collection
\begin{align} \label{collection}
    \Big \{{\frac{a_r -b_{\sfp(i_{r})} }{2}}: r\in \mathcal A\Big \} \cup \{ a_{r} - b_{\sfp(i_{r})} :  r\in \mathcal B  \}  \cup  \{ a_{r} - b_{\sfp(j_{r})} :  r\in \mathcal B  \} ,
\end{align}
i.e., a multiset of size $\eta:=|\mathcal A| +2|\mathcal B|  \ge m$. Specifying the above collection is motivated by the fact that the inverses/reciprocals of these terms will exactly, up to a common multiplicative factor of $2$, be the conductances of the non-curly edges that will allow the Gaussian integral \eqref{gauss} to correspond to the GFF on the associated weighted graph with a distinguished vertex which for the ease of readability will be spelt out shortly. Distinguishing the set $\cA$ allows us to work with simple graphs instead of multi-graphs. Since for any $r \in \cA,$ the term $a_{r}-b_{\sfp(i_{r})}$ appears twice in \eqref{215} on account of the fact that $a_{r}-b_{\sfp(i_{r})}=a_{r}-b_{\sfp(j_{r})}$, one can think of either two edges between $b_{\sfp(i_{r})}$ and $a_r$ or a single edge with twice the conductance. 
While all the theory of weighted graphs that we rely on work even for multi-graphs, we choose to adopt the latter  convention for notational clarity. 

However, first, it will be convenient to order the elements in \eqref{collection} 
in a non-increasing order:
\begin{align}\label{non-inc23}
    \ell^{(\mathcal I)}_1 \ge \ell^{(\mathcal I)}_2 \ge \cdots \ge \ell^{(\mathcal I)}_{\eta}.
\end{align}
Given the above, recall from Figure \ref{fig:feynmandiag2} that the graph of interest $ \wh G = (\wh V, E)$ has vertex set  $$\wh V:=\{0,a_1,b_1,\cdots,a_m,b_m\},$$ with the vertex $0$ being a distinguished vertex (the augmentation), and the edge set 
\begin{align*}
    E :=   \{ (a_r,b_r) :  1\le r \le m\} \cup   \{ (a_r,b_{\sfp(i_r)}) : r\in \mathcal A\}\cup     \{ (a_r,b_{\sfp(i_r)}) : r\in \mathcal B\} \cup \{(a_r,b_{\sfp(j_r)}):  r\in \mathcal B\}.
\end{align*} 
Note that $|\wh V| = 2m+1$. 
Finally, \(\{c_e\}_{e\in E}\) denoting the collection of the conductances is defined as twice the corresponding coefficient in the Gaussian integral  \eqref{gauss}, i.e.,
 \begin{align}
 \label{conductprof}
     \{ c_e\}_{e\in E} = \Big\{ \frac{4}{b_1-a_1},\cdots, \frac{4}{b_m-a_m},\frac{2}{\ell^{(\mathcal I)}_1},\cdots, \frac{2}{\ell^{(\mathcal I)}_{\eta}}\Big\}. 
 \end{align}
 In particular, owing to the comment following \eqref{dominance} and  \eqref{dominance2}, the first $m$ conductance values are larger than the last $\eta$ values.
 Finally, also note that every vertex in $\wh G$, {except  $0$ which has degree at most $h$}, has degree at most $3$. This is because every internal vertex $a_r$ for $r=1,2,\ldots, m,$ is connected to $b_r$ through a curly edge, and to two parent vertices $b_{\sfp(i_r)},b_{\sfp(j_r)}$ using non-curly edges. Similarly, every internal vertex $b_r$ for $r=1,2,\ldots, m-1,$ is connected to $a_r$ through a curly edge, and at most two edges connecting to the locations of the next collisions that random walk paths indexed $i_r,j_r$ are involved in. Finally, $0$ is connected to all the initial locations of the collisions of the $h$ random walks indexed $1,2, \ldots, h.$

We now arrive at the all important lower bound of the integral  in \eqref{gauss}.
\begin{lemma} \label{treebound}
Let  $ \mathcal{I}\in \textup{\textsf{Col}}^{(h,m)}$  be any collision pattern.
For any times $2\le a_1<b_1<\cdots < a_m<b_m\le 3$ satisfying \eqref{a} and \eqref{b}, the quantity \eqref{215} is lower bounded by  
\begin{align}
  (2\pi)^{2m}\cdot   2^{-3m} \cdot  3^{-2m}     \cdot    \prod_{r=1}^{m}  (b_r-a_r)\cdot   \prod_{k=\eta-m+1}^{\eta}  \ell^{(\mathcal I)}_k.
\end{align}
\end{lemma}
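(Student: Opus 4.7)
The plan is to apply Lemma \ref{key} to recast the Gaussian integral \eqref{215} as the reciprocal of a weighted spanning-tree sum on $\wh G$, and then to upper bound that sum by combining a bound on the maximum tree weight with a bound on the number of spanning trees. More precisely, after identifying the integration variables $x_1,y_1,\dots,x_m,y_m$ with field values $\phi_{a_r},\phi_{b_r}$ and pinning $\phi_0=(0,0)$, one should verify term by term that the exponent of the integrand in \eqref{215} is exactly $-\tfrac12\sum_{\{u,v\}\in E}c_{uv}|\phi_u-\phi_v|^2$ for the conductance profile \eqref{conductprof}. The only nontrivial bookkeeping step is that for $r\in\mathcal A$ two identical quadratic terms in \eqref{215} coalesce into a single edge; this is precisely why the associated length in \eqref{collection} carries a factor of $1/2$, so that the effective conductance $4/(a_r-b_{\sfp(i_r)})=2/\ell_k^{(\mathcal I)}$ matches the combined coefficient $2/(a_r-b_{\sfp(i_r)})$ in the exponent. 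Granted this identification, and since $|\wh V|-1=2m$, Lemma \ref{key} gives
\[
\text{integral }\eqref{215}\;=\;\frac{(2\pi)^{2m}}{\sum_{T\in\mathcal T(\wh G)}c_T},
\]
so the task reduces to proving the upper bound
\[
\sum_{T\in\mathcal T(\wh G)}c_T\;\le\;2^{3m}\cdot 3^{2m}\cdot \prod_{r=1}^{m}(b_r-a_r)^{-1}\prod_{k=\eta-m+1}^{\eta}(\ell^{(\mathcal I)}_k)^{-1}.
\]

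I would bound this sum via the crude estimate $\sum_{T}c_T\le |\mathcal T(\wh G)|\cdot\max_T c_T$. By condition \eqref{dominance} combined with the factor-of-two difference between curly conductances $4/(b_r-a_r)$ and non-curly conductances $2/\ell_k^{(\mathcal I)}$, every curly conductance strictly dominates every non-curly one. Consequently, the product of the top $2m$ conductances of $\wh G$ is realized by taking all $m$ curly edges together with the $m$ non-curly edges of largest conductance, namely those indexed by $k=\eta-m+1,\dots,\eta$ in the ordering \eqref{non-inc23}. Since any spanning tree of $\wh G$ uses exactly $2m$ edges, its weight is at most this top product, giving
\[
\max_{T}c_T\;\le\;\prod_{r=1}^{m}\frac{4}{b_r-a_r}\cdot\prod_{k=\eta-m+1}^{\eta}\frac{2}{\ell^{(\mathcal I)}_k}\;=\;2^{3m}\prod_{r=1}^{m}(b_r-a_r)^{-1}\prod_{k=\eta-m+1}^{\eta}(\ell^{(\mathcal I)}_k)^{-1}.
\]

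For the tree count I would use the standard parent-assignment bound: rooting any spanning tree of $\wh G$ at $0$, each of the $2m$ internal vertices is assigned a unique neighbor as its parent, so $|\mathcal T(\wh G)|\le\prod_{v\neq 0}\deg_{\wh G}(v)\le 3^{2m}$, using the fact recorded right after \eqref{conductprof} that every internal vertex has degree at most $3$ in $\wh G$. Multiplying the two bounds and substituting into the identity above produces the claimed lower bound on \eqref{215}. There is no real analytical obstacle: once one correctly reads off the conductance profile from the quadratic form (with the doubled-coefficient case $r\in\mathcal A$ being the only subtle point), the lemma is a transparent combination of Lemma \ref{key} (i.e., the matrix--tree theorem), the curly-versus-non-curly dominance \eqref{dominance}, and the uniform degree bound on $\wh G$.
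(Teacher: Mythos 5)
Your proposal is correct and follows essentially the same route as the paper: apply Lemma \ref{key}, bound each spanning-tree weight by the product of the $m$ curly conductances and the $m$ largest non-curly conductances (using the dominance from \eqref{a}--\eqref{b}), and bound the number of spanning trees by $3^{2m}$ via the degree bound on $\wh G$ (the paper's Lemma \ref{graph} is exactly your parent-assignment argument). The only point the paper makes explicitly that you leave implicit is that $\wh G$ is connected, which is needed to invoke Lemma \ref{key}, but this is immediate since every vertex connects to the distinguished vertex $0$.
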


\begin{proof}
As indicated already, we will apply Lemma \ref{key}. Let \(\mathcal{T}=\mathcal{T}(\wh G)\) be the set of all spanning trees of \(\wh G\). We claim that
    \begin{align} \label{321}
        \sum_{T \in \mathcal{T}} \prod_{e \in E(T)} c_e \le   3^{2m}\cdot \prod_{r=1}^{m} \frac{4}{b_r-a_r} \cdot \prod_{k=\eta-m+1}^{\eta} \frac{2}{\ell^{(\mathcal I)}_k} .
       \end{align}
Let $T\in \mathcal T$ be any fixed  spanning tree.
Then $T$ has $2m$ edges (since $|\wh V|=2m+1$).  Recall from \eqref{conductprof} that the set of conductances of all the edges is \begin{align*} \{ c_e\}_{e\in E} = \Big\{ \frac{4}{b_1-a_1},\cdots, \frac{4}{b_m-a_m},\frac{2}{\ell^{(\mathcal I)}_1},\cdots, \frac{2}{\ell^{(\mathcal I)}_{\eta}}\Big\}.
 \end{align*}
 and that the first $m$ values are larger than the remaining $\eta$ values and further among the latter the last $m$ values are the largest since the $\frac{2}{\ell^{(\mathcal I)}_j}$s were arranged in increasing order. Thus
\begin{align}
    \prod_{e \in E(T)} c_e \le    \prod_{r=1}^{m} \frac{4}{b_r-a_r} \cdot \prod_{k=\eta-m+1}^{\eta}  \frac{2}{\ell^{(\mathcal I)}_k} .
\end{align}
Finally, as already noted, every vertex in $\wh G$ other than $0$ has degree at most $3.$ A standard graph theoretic result (stated and proved in the Appendix for completeness, see Lemma \ref{graph}) implies that $|\mathcal T(\wh G)| \le  3^{2m}$. Hence we obtain \eqref{321}, and thus by Lemma \ref{key} we conclude the proof. 
{Note that the Lemma \ref{key} applies since our graph $\wh G$ is connected—indeed, every vertex is connected to the distinguished vertex 0.}   
\end{proof}

Putting things together, by Lemma \ref{treebound}, for any collision pattern  $ \mathcal{I}\in \textup{\textsf{Col}}^{(h,m)}$  and times $2\le a_1<b_1<\cdots < a_m<b_m\le 3$ satisfying \eqref{a} and \eqref{b}, recalling that the pre-factor of the integral in the RHS of  \eqref{2} is 
$$\Big (  \frac{1}{\pi a_1} \Big)^2 \prod_{r=1}^{m}     G_\theta (b_r-a_r) \cdot \prod_{r=1}^{m} \frac{1}{2\pi \cdot  \frac{b_r-a_r}{4}}  \cdot \prod_{r=1}^{m-1} \Big[\frac{1}{ \pi   (  {a_{r+1} - b_{\sfp(i_{r+1})}} )  }   \cdot  \frac{1}{\pi   ({a_{r+1} - b_{\sfp(j_{r+1})}}) } \Big]$$
it follows that the entire RHS is lower bounded by 
\begin{align} \label{222}
      C ^m &\prod_{r=1}^m G_\theta (b_r-a_r) \cdot 
    \prod_{r=1}^{m} \frac{1}{b_r-a_r} \cdot  \prod_{r\in \mathcal A} \frac{1}{(a_r-b_{\sfp(i_{r})})^2 } \cdot  \prod_{r\in \mathcal B} \Big[\frac{1}{a_r-b_{\sfp(i_{r})} } \cdot \frac{1}{a_r-b_{\sfp(j_{r})} }  \Big] \nonumber \\
    &\qquad  \qquad  \qquad  \qquad  \qquad  \qquad   \cdot   \Big[ (2\pi)^{2m}\cdot 2^{-3m}  \cdot  
 3^{-2m}     \cdot \prod_{r=1}^{m}  (b_r-a_r) \prod_{k=\eta-m+1}^{\eta}  \ell^{(\mathcal I)}_k \Big]  \nonumber\\
 &\ge  C^m     \prod_{r=1}^m G_\theta (b_r-a_r) \cdot  \prod_{r\in \mathcal A} \frac{1}{((a_r-b_{\sfp(i_{r})})/2)^2 } \cdot  \prod_{r\in \mathcal B} \Big[\frac{1}{a_r-b_{\sfp(i_{r})} } \cdot \frac{1}{a_r-b_{\sfp(j_{r})} }  \Big]\cdot  \prod_{k=\eta-m+1}^{\eta}  \ell^{(\mathcal I)}_k \nonumber\\
 &=  C^m     \prod_{r=1}^m G_\theta (b_r-a_r) \cdot  \prod_{r\in \mathcal A} \frac{1}{(a_r-b_{\sfp(i_{r})})/2}  \cdot  \prod_{k=1}^{\eta-m}  \frac{1}{\ell^{(\mathcal I)}_k},
\end{align}
where in the last equality we used the fact that $\ell^{(\mathcal I)}_1 , \cdots , \ell^{(\mathcal I)}_\eta$ forms an enumeration of the elements in \eqref{collection}. Above, the constant $C$ changed from line to line but was always a constant independent of the weighted graph.

~

The task has now reduced to controlling the quantities $\ell^{(\mathcal I)}_k$ and in particular showing that they are not too large.
Recalling the definitions of $\sfp(i_r)$ and  $\sfp(j_r)$ from \eqref{221}, and that $\sfp(i_{1})=\sfp(j_{1})=0$, we aim to control the size of gaps $r-\sfp(i_r)$ along with $r-\sfp(j_r)$.  Note that given  a collision pattern $\mathcal I=(\{i_1,j_1\},...,\{i_m,j_m\}) \in \textsf{Col}^{(h,m)}$, the values of  $\sfp(i_r)$ and  $\sfp(j_r)$  are completely determined. The following lemma controls the product of gaps $r-\sfp(i_r)$ and $r-\sfp(j_r)$.  

\begin{lemma}\label{short}
Let $\mathcal I=(\{i_1,j_1\},...,\{i_m,j_m\}) \in \textup{\textsf{Col}}^{(h,m)}$ be any  collision pattern.
Recalling that  $\eta = |\mathcal A| + 2|\mathcal B|$, let 
\begin{equation}\label{noninc24}
\mathcal K^{(\mathcal I)}_1 \ge \cdots \ge \mathcal K^{(\mathcal I)}_{\eta}
\end{equation}
 be the arrangement of the elements 
\begin{align*}
    \{r - \sfp(i_{r}): r\in \mathcal A\}\cup \{r - \sfp(i_{r}): r\in \mathcal B\} \cup \{r - \sfp(j_{r}): r\in \mathcal B\} 
\end{align*} 
in a non-increasing order (note the distinction from the related labeling \eqref{non-inc23}). Then,
\begin{align}  \label{amgm}
\prod_{k=1}^{\lceil m/2 \rceil} \mathcal K^{(\mathcal I)}_k \le (2h)^{\lceil m/2 \rceil}.
\end{align}
Note that as $\eta \ge |\mathcal A| + |\mathcal B| = m \ge \lceil m/2 \rceil,$ the above product is valid.
\end{lemma}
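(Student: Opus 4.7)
The plan is to combine AM--GM with a telescoping identity coming directly from the definition of the ``parent'' map $\sfp(\cdot)$ in \eqref{221}. Specifically, I would first pass from the ordered product $\prod_{k=1}^{\lceil m/2\rceil}\mathcal{K}^{(\mathcal I)}_k$ to the total sum of all gaps via AM--GM, and then bound that total sum globally by $hm$.

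For the AM--GM step, since the $\mathcal{K}^{(\mathcal I)}_k$ are arranged in non-increasing order,
\begin{align*}
\prod_{k=1}^{\lceil m/2\rceil}\mathcal{K}^{(\mathcal I)}_k
\;\le\;
\left(\frac{1}{\lceil m/2\rceil}\sum_{k=1}^{\lceil m/2\rceil}\mathcal{K}^{(\mathcal I)}_k\right)^{\lceil m/2\rceil}
\;\le\;
\left(\frac{1}{\lceil m/2\rceil}\sum_{k=1}^{\eta}\mathcal{K}^{(\mathcal I)}_k\right)^{\lceil m/2\rceil}.
\end{align*}
Thus the lemma reduces to showing the total-sum bound $\sum_{k=1}^{\eta}\mathcal{K}^{(\mathcal I)}_k \le hm$, since then $hm/\lceil m/2\rceil \le 2h$.

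For the sum bound, I would reorganize the sum by tracking each random-walk index $\ell \in \{1,\dots,h\}$ separately. For each $\ell$, let $r^\ell_1 < r^\ell_2 < \cdots < r^\ell_{k_\ell}$ be the ordered list of indices $r \in \{1,\dots,m\}$ with $\ell \in \{i_r, j_r\}$, and set $r^\ell_0 := 0$. By the very definition of $\sfp(\cdot)$ in \eqref{221}, whenever $\ell \in \{i_r, j_r\}$ at time $r = r^\ell_s$, the corresponding ``$\sfp$ of $\ell$'' equals $r^\ell_{s-1}$, so the contribution of this appearance of $\ell$ to the sum is $r^\ell_s - r^\ell_{s-1}$. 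Telescoping gives $\sum_{s=1}^{k_\ell}(r^\ell_s - r^\ell_{s-1}) = r^\ell_{k_\ell} \le m$ for each $\ell$.

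Finally, summing over $\ell$, if we counted \emph{both} $(r-\sfp(i_r))$ and $(r-\sfp(j_r))$ for every $r \in \{1,\dots,m\}$, the resulting total would equal $\sum_{\ell=1}^{h} r^\ell_{k_\ell} \le hm$. Our actual sum $\sum_{k=1}^{\eta}\mathcal{K}^{(\mathcal I)}_k$ is bounded above by this quantity, because for $r \in \mathcal{A}$ (where $\sfp(i_r)=\sfp(j_r)$) we include only one of the two identical terms instead of two, and for $r \in \mathcal{B}$ we include both. Hence $\sum_{k=1}^{\eta}\mathcal{K}^{(\mathcal I)}_k \le hm$, and combining with the AM--GM step yields \eqref{amgm}. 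The only step requiring any care is ensuring the telescoping identity is applied cleanly against the ``undercount'' on $\mathcal{A}$, but this goes in the favorable direction and so poses no real obstacle.
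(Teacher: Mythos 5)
Your proposal is correct and follows essentially the same route as the paper: AM--GM applied to the top $\lceil m/2\rceil$ gaps, followed by the observation that the total of all gaps is at most $hm$ because each of the $h$ walk indices telescopes to a total displacement at most $m$ (the paper phrases this as the total jump lengths of the $h$ particles, after embedding the $\eta$ terms into the doubled multiset of $2m$ gaps, which matches your handling of the undercount on $\mathcal A$). Your explicit telescoping via the times $r^\ell_1<\cdots<r^\ell_{k_\ell}$ is just a slightly more detailed rendering of the same argument.
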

\begin{proof}
Let $\widetilde{\mathcal K}^{(\mathcal I)}_1 \ge \cdots \ge \widetilde{\mathcal K}^{(\mathcal I)}_{2m}$ be the arrangement of the $2m$ many elements
\begin{align*}
    \{r - \sfp(i_{r}): 1\le r \le m\}\cup   \{r - \sfp(j_{r}):  1\le r \le m\} 
\end{align*} 
in a non-increasing order. As $\{\mathcal K^{(\mathcal I)}_1,\dots,\mathcal K^{(\mathcal I)}_{\eta}\}\subseteq\{\widetilde{\mathcal K}^{(\mathcal I)}_1 ,\dots, \widetilde{\mathcal K}^{(\mathcal I)}_{2m}\}$, we have 
\begin{align*}
\sum_{k=1}^{\lceil m/2 \rceil}{\mathcal K}^{(\mathcal I)}_k\le \sum_{k=1}^{\eta}{\mathcal K}^{(\mathcal I)}_k\le \sum_{k=1}^{2m}\widetilde{\mathcal K}^{(\mathcal I)}_k.
\end{align*}
{Note that for any $1\le r \le m$, if $i_r=i$, then the $i^{th}$ particle or label can be thought to jump from $\sfp(i_{r})$ to $r$ having jump length $r - \sfp(i_{r})$ (similarly for $\sfp(j_{r})$).

{Since every particle has a journey of duration at most $m$ jumping across the times $1\le r\le m$, by considering the total jump lengths of all the $h$ particles,} we obtain}
\begin{align*}  
\sum_{k=1}^{2m}\widetilde{\mathcal K}^{(\mathcal I)}_k \le mh.
\end{align*}
Thus by AM-GM inequality,
\begin{align*}  
\prod_{k=1}^{\lceil m/2 \rceil} \mathcal K^{(\mathcal I)}_k \le \Big(\frac{\sum_{k=1}^{{\lceil m/2 \rceil}}\mathcal K^{(\mathcal I)}_k }{{\lceil m/2 \rceil}}\Big)^{\lceil m/2 \rceil} \le \Big(\frac{\sum_{k=1}^{2m}\widetilde{\mathcal K}^{(\mathcal I)}_k}{{\lceil m/2 \rceil}}\Big)^{\lceil m/2 \rceil} \le (2h)^{\lceil m/2 \rceil},
\end{align*}
concluding the proof.
\end{proof}

We are now ready to conclude the proof of  Proposition \ref{heat}.
\begin{proof}[Proof of Proposition \ref{heat}]
 Recall that we have reduced the task to lower bounding \eqref{222}.   
Towards this, {assume that $m \ge 100h$, and} let  $ {\mathcal{I}\in \textsf{Col}^{(h,m)}}$ be any pattern configuration and 
\begin{equation}\label{vertexset}
2\le a_1<b_1<\cdots < a_m<b_m\le 3
\end{equation}
 be any given  times satisfying \eqref{a} and \eqref{b}.
Relating the quantities in \eqref{non-inc23} and \eqref{noninc24}, observe that 
for any $1\le r\le m$ such that  $\sfp(i_{r})  \ge 1,$
\begin{align} \label{finalfinal}
    a_{r} - b_{\sfp(i_{r})} \le      a_{r} - a_{\sfp(i_{r})} \le (r- \sfp(i_{r})) \cdot \frac{1}{m} + \frac{1}{5m} \le (r- \sfp(i_{r})) \cdot \frac{2}{m} ,
    \end{align}
and similarly for  $    a_{r} - b_{\sfp(j_{r})}$.  Note that the condition $\sfp(i_{r})  \ge 1$ is essential, since otherwise (i.e.  $\sfp(i_{r})  =0$) we have $ a_{r} - b_{\sfp(i_{r})} = a_r \ge 2$.
Also, since there are  $h$ emanating paths from the origin,
{\begin{align} \label{0bound}
    |\{ 1\le r\le m: \sfp(i_{r}) =0 \}| +    |\{ 1\le r\le m: \sfp(j_{r}) =0 \}|  \le h.
\end{align}
Here we may have a strict inequality above, since some of the emanating paths may not be involved in any collision at all.}
In order to lower bound \eqref{222}, we upper bound the product of the following elements
\begin{align}   \label{twosubset}
  \Big \{\frac{a_r-b_{\sfp(i_{r})}}{2}: r\in \mathcal A\Big\}  \cup \{ {\ell^{(\mathcal I)}_1} , \cdots \ell^{(\mathcal I)}_{\eta-m}\}
\end{align}
(recall that $\ell^{(\mathcal I)}_1  \ge \cdots \ge \ell^{(\mathcal I)}_\eta$ are arrangements of the elements in \eqref{collection}). To accomplish this,
we use the fact that every element, particularly those whose corresponding value of $\sfp(i_{r})$ or $\sfp(j_{r})$ is zero, is at most 3 (by virtue of the bounds in \eqref{vertexset}), and then apply \eqref{finalfinal}  for the product of others.
Recalling the definition of $\mathcal K^{(\mathcal I)}_1 \ge \cdots \ge \mathcal K^{(\mathcal I)}_{\eta}$ in Lemma \ref{short},  noting that $ |\mathcal A| + (\eta-m)  = m$ (since $\eta =  |\mathcal A|+2 |\mathcal B|$ and  $m=|\mathcal A|+ |\mathcal B|$),
 \begin{align*}  
   \prod_{r\in \mathcal A} \frac{a_r-b_{\sfp(i_{r})}}{2}  \cdot  \prod_{k=1}^{\eta-m}  {\ell^{(\mathcal I)}_k}  \le  3^{2h} \cdot \Big( \prod_{k=1}^{\lceil m/2 \rceil} \mathcal K^{(\mathcal I)}_k \Big)^2 \cdot   \Big(\frac{2}{m}\Big)^{m-2h} \le 3^{2h} \cdot (2h)^{m+1} \cdot   \Big(\frac{2}{m}\Big)^{m-2h} .
   \end{align*} 
Here, the appearance of $2h$ in the above exponents $2h$ and $m-2h$  (which is positive since we assumed $m \ge 100 h$) follows from  \eqref{0bound} along with the fact the element whose  $\sfp(i_{r})$ or $\sfp(j_{r})$ is zero can appear in each of  the two subsets in \eqref{twosubset}. {Further note that each $\cK^{(\mathcal I)}_k\ge 1$ and  for any $k$,  $\cK^{(\mathcal I)}_k$ can at most appear twice, once in the first product $\prod_{r\in \mathcal A} \frac{a_r-b_{\sfp(i_{r})}}{2}$ and once in $ \prod_{k=1}^{\eta-m}  {\ell^{(\mathcal I)}_k}$.  The first inequality is an immediate consequence of the above.}

This implies that the quantity \eqref{222} (and thus the quantity \eqref{2}) is lower bounded by  
\begin{align} \label{223}
      C^m  \frac{1}{h\cdot 3^{2h}m^{2h}} \Big (\frac{m}{h} \Big )^{m}  \prod_{r=1}^m G_\theta (b_r-a_r)   .
\end{align}
Next, we integrate  the above quantity with respect to $b_1<b_2<\cdots<b_m$ variables over the region \eqref{b}, given $2\le a_1<a_2<\cdots<a_m < 3$ satisfying \eqref{a}.  By Lemma \ref{lemma asym} and noting that $ (2+ \frac{2i}{2m}  - \frac{1}{5m}) -  (2+ \frac{2i-1}{2m}  + \frac{1}{10m} )= \frac{1}{5m} $, for sufficiently large $m$ (depending on $\theta$),
\begin{align} \label{boundbound}
    \int_{a_r}^{2+ \frac{2i}{2m}  - \frac{1}{5m}} G_\theta (b_r-a_r) \dd b_r  \ge   \int_{0}^{\frac{1}{5m}} G_\theta (s) \dd s \ge \frac{1}{2\log m}.
\end{align}
Thus given $2\le a_1<a_2<\cdots <a_m<3$ satisfying \eqref{a} and a  pattern configuration $\mathcal I$, the integration of \eqref{223} with respect to $b_1<b_2<\cdots<b_m$ variables over the region \eqref{b} yields the lower bound
\begin{align}\label{int234}
     C^m  \frac{1}{h\cdot 3^{2h}m^{2h}} \Big (\frac{m}{h} \Big )^{m}   \Big  ( \frac{1}{\log m} \Big )^m     .
\end{align}
Next, we integrate the above quantity over $a_1<\cdots<a_m$ variables over the region \eqref{a}. Note that, by \eqref{a}, the volume of the latter is $\left(\frac{1}{5m}\right)^m.$ Since the quantity in \eqref{int234} is independent of $a_1,\cdots,a_m$, we obtain  the lower bound 
\begin{align*}
     C^m   \Big (\frac{1}{5m}\Big  )^{m}  \frac{1}{h\cdot 3^{2h}m^{2h}} \Big (\frac{m}{h} \Big )^{m}   \Big ( \frac{1}{\log m} \Big )^m.
\end{align*}
Finally, we take a summation over all pattern configurations $\mathcal I$  over $ \textsf{Col}^{(h,m)}$, whose cardinality is at least $C^m h^{2m}$ from \eqref{col}:
\begin{align}\label{display12}
C^m  h^{2m} \cdot  \Big (\frac{1}{5m}\Big  )^{m}  \frac{1}{h\cdot 3^{2h}m^{2h}} \Big (\frac{m}{h} \Big )^{m}      \Big ( \frac{1}{\log m} \Big )^m  =C^m  \frac{1}{h\cdot 3^{2h}m^{2h}}h^m  \Big ( \frac{1}{\log m} \Big )^m  .
\end{align}
Note that $C$ in the RHS above is a universal constant.
{Taking $m = e^{Ch/2e}  \ge  100h$}, we  deduce that the above quantity is at least  
 \begin{align*}
     C^m  \frac{1}{h\cdot 3^{2h}m^{2h}} h^m \Big (\frac{2e}{Ch} \Big)^{m}  \ge   (2e)^m \frac{1}{(10m)^{2(2e/C) \log m}}   \ge e^m = \exp( \exp(Ch/2e))
 \end{align*}
 for sufficiently large $h$ which finishes the proof.

Let us comment that from this proof we learn that the dominant contribution to the $h^{th}$ moment appears from exponentially in $h$ many collisions of the random walks in, say, the Feynman diagram illustrated in Figure \ref{fig:feynmandiag}.

\end{proof}

We conclude this section with a few remarks.  
    
{\begin{remark}\label{scaling2}
In this remark, we explain, as already alluded to in Remark \ref{scaling1}, how the scaling property \eqref{scalingrelation} implies Proposition \ref{heat}  for a general terminal time $t>0$, i.e., 
\begin{align}  \label{generaltime}
 \E\Big[ \big( \mathscr{Z}_{t}^\theta(g_1) \big)^h\Big]   \ge  \exp (e^{c_1h}).
\end{align}
 The scaling relation \eqref{scalingrelation} says that 
\begin{align}  
 \mathscr{Z}^\theta_{0,t} (\dd (\sqrt{t} x),\dd (\sqrt{t} y))  \overset{\text{law}}{=} t \mathscr{Z}^{\theta + \log t}_{0,1} (\dd x,\dd y) ,
  \end{align}
i.e. the parameter \( \theta \) in the  Critical $2d$ SHF is effectively shifted to \( \theta + \log t \).  Using this, 
  \begin{align*}  
 \mathscr{Z}_{t}^\theta(g_1) = \int_{\R^2} g_1(x)   \mathscr{Z}_{t}^\theta(\dd x) &= t \int_{\R^2} g_1(x)   \mathscr{Z}^{\theta + \log t}_{1} \Big( \dd \Big(\frac{x}{\sqrt{t}}\Big)\Big)  \\
 & = t \int_{\R^2} g_1(\sqrt{t}x)   \mathscr{Z}^{\theta + \log t}_{1} (\dd x) \\
 & = \int_{\R^2} g_{1/t}(x)   \mathscr{Z}^{\theta + \log t}_{1} (\dd x) =  \mathscr{Z}_{1}^{\theta + \log t}(g_{1/t})  .
    \end{align*} 
    Consequently, this alteration of the parameter  \( \theta \)  affects solely in (i) the time variable of the heat kernel when tested against the  Critical $2d$ SHF, and (ii) the $\theta$‐dependence of $G_\theta$ in the key estimate \eqref{boundbound} used to prove Proposition \ref{heat}. For (i), this makes a shift of variables $a_1<b_1<\cdots<a_m<b_m$ by $2/t$, and the same aforementioned arguments apply. For (ii),  Lemma~\ref{lemma asym} guarantees that the bound \eqref{boundbound}  remains valid for sufficiently large \(m\) (depending on \(t\)). Consequently, we conclude that \eqref{generaltime}  continues to hold for all \( t > 0 \), provided \( h \) is taken sufficiently large depending on \( t \).
\end{remark}
}

\begin{remark}\label{erdostaylor}It was shown in \cite{multi}, as a generalization of the well known Erd\H{o}s-Taylor theorem, that if $h$ planar random walks \emph{start at the same point}, say from the origin, then their intersection local times,  i.e., the number of the times they intersect, are approximately independent standard ${\rm{Exp}}({\pi})$ variables at scale $\log n.$ Thus,  in the notation from \eqref{pairwise}, recalling that $\beta^{2}$ is taken to be $\frac{\pi}{\log n},$ the terms in $\sum_{1\le i <j \le h}\beta^{2}\cI_{i,j}$ are approximately independent which makes the lower bound in \eqref{corrlb21} essentially sharp. This is in essence the reason that underlies the sharpness of the lower bound 
in \eqref{thm:mom12} at least at the level of exponents. Things are however not completely independent and the effect of the positive correlation is expected to manifest in the  $O(1)$ multiplicative factor $C$ depending on $h$ which may be speculated to possibly grow double exponentially as well. 

However, when the paths start at different locations, which is the scenario under consideration for Proposition \ref{heat} or Theorem \ref{main}, the approximate independence does not hold any more as reflected in the lower bound being exponentially larger than that obtained from the Gaussian correlation inequality.
\end{remark}

Having proven Proposition \ref{heat}, to prove Theorem \ref{main} we will set up a comparison argument which will allow us to transfer the estimate in Proposition \ref{heat} with $g_1$ as a test function to a generic $\varphi.$ The key input is the already stated monotonicity result Proposition \ref{monotone} which we prove next. As hinted at in Section \ref{iop}, the domain Markov property of the GFF will play a crucial role in the proof.

\section{Monotonicity of correlation kernel}  \label{sec4}
To ease readability, let us recall the statement. 

\begin{proposition}\label{monotone1}
For any $(z_1,z_2,\cdots,z_h) \in (\R^2)^h$, the function $\alpha \mapsto K_t^{(h)}(\alpha z_1,\alpha z_2,\cdots,\alpha z_h)$ is non-increasing in $\alpha>0.$
\end{proposition}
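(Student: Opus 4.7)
The strategy is to recognize that, for each fixed collision pattern $\mathcal I$ and time slices $(\mathbf a,\mathbf b)$, the spatial integrand appearing in \eqref{kernel} is (up to a $z$-independent constant) the Boltzmann weight of a Gaussian Free Field on an associated weighted graph $G=G(\mathcal I,\mathbf a,\mathbf b)$ with boundary values $(z_1,\ldots,z_h)$. The monotonicity in $\alpha$ then becomes the statement that the GFF partition function with boundary data $\alpha\tau$ is non-increasing in $\alpha\geq 0$, which is a direct consequence of the domain Markov property.

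More concretely, I would first identify the graph $G$: its interior vertices are $\{a_r,b_r\}_{r=1}^m$, and the edges with their conductances are dictated by the Gaussian factors in \eqref{kernel}, namely curly edges $(a_r,b_r)$ of conductance $\propto 1/(b_r-a_r)$, parent edges $(a_r,b_{\sfp(\cdot)})$ of conductance $\propto 1/(a_r-b_{\sfp(\cdot)})$, and boundary edges $(a_r,z_{i_r})$ or $(a_r,z_{j_r})$ of conductance $\propto 1/a_r$ whenever the corresponding parent is $0$. The $\R^2$-valued field assigns $x_r$ to $a_r$, $y_r$ to $b_r$, and the prescribed $z_i$ to each boundary vertex $z_i$, so that the spatial integrand of a summand in \eqref{kernel}, up to the $z$-independent normalizing constants coming from the $(2\pi t)^{-1}$ factors in the $g_t$'s, is exactly $\exp\!\bigl(-\tfrac{1}{2}\sum_e c_e|\phi_u-\phi_v|^2\bigr)$.

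Next I would apply the domain Markov property to decompose $\Phi=\Phi^0+H_z$, where $\Phi^0$ is the zero-boundary GFF on $G$ and $H_z$ is the (componentwise) discrete harmonic extension of $(z_1,\ldots,z_h)$ to the interior. Since harmonic extensions are Dirichlet-orthogonal to functions vanishing on the boundary, the quadratic form splits as $\sum_e c_e|\phi_u-\phi_v|^2=\sum_e c_e|\phi_u^0-\phi_v^0|^2+\mathcal E(H_z)$, and hence
\begin{equation*}
\int \exp\!\Bigl(-\tfrac{1}{2}\sum_e c_e|\phi_u-\phi_v|^2\Bigr)\,\prod_{r=1}^m dx_r\,dy_r \;=\; Z_0(G)\cdot \exp\!\bigl(-\tfrac{1}{2}\mathcal E(H_z)\bigr),
\end{equation*}
where $Z_0(G)$ is the $z$-independent zero-boundary partition function and $\mathcal E(H_z)\ge 0$ is the Dirichlet energy of the harmonic extension. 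Linearity of the harmonic extension yields $H_{\alpha z}=\alpha H_z$, so $\mathcal E(H_{\alpha z})=\alpha^2\mathcal E(H_z)$, and every summand in \eqref{kernel} becomes a product of $z$-independent factors with $\exp(-\tfrac{\alpha^2}{2}\mathcal E(H_z))$, which is non-increasing in $\alpha>0$. Summing over $m$ and $\mathcal I$, integrating over $(\mathbf a,\mathbf b)$, and adjoining the constant $m=0$ summand preserves the monotonicity, yielding the claim.

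The main obstacle I anticipate is bookkeeping: the set of $z_i$ that actually appears in the spatial integrand depends on $\mathcal I$ (only those labels hit by a first collision contribute), so the boundary of $G$ must be defined pattern-by-pattern and some $z_i$'s may not enter at all; moreover, the $z$-independent prefactors from the heat kernels must be carefully absorbed into $Z_0(G)$ so that only $\exp(-\tfrac{1}{2}\mathcal E(H_z))$ carries the $z$-dependence. Once these details are in place, the monotonicity in $\alpha$ holds uniformly for every summand and therefore for the full sum defining $K_t^{(h)}$.
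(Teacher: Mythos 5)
Your proposal is correct and follows essentially the same route as the paper: for each fixed collision pattern and time slices, the spatial integral is identified as a GFF partition function with boundary data $\alpha z$, the change of variables by the harmonic extension (with the cross term killed by discrete integration by parts / harmonicity) factors out $\exp\bigl(-\alpha^2\,\mathcal E(H_z)\bigr)$ times an $\alpha$-independent zero-boundary integral, and monotonicity is then summed over patterns and integrated over times. Your remarks on labels not hit by any collision and on absorbing the $z$-independent heat-kernel prefactors are exactly the bookkeeping the paper handles implicitly, so no gap remains.
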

\begin{proof}

Since the argument is identical for any $t>0$, we prove the result in the case $t=1$. Accordingly, we write  $K^{(h)}(\bz): = K_1^{(h)}(\bz) $.

In the expression  of the kernel $K^{(h)}(\bz)  $ in \eqref{kernel},
given a positive integer $m$, a collision pattern $ \mathcal{I}= (\{i_1,j_1\},...,\{i_m,j_m\}) \in \textsf{Col}^{(h,m)}$, and times $0\le a_1< b_1<...< a_m < b_m\le 1  $,  the integral with respect to spatial variables  $x_1,y_1,...,x_m, y_m \in  \R^2$ is written as
\begin{align} 
  &  \iint\limits_{\substack{x_1,y_1,...,x_m, y_m \in  \R^2 }}     g_{\f{a_1}{2}} (x_1-z_{i_1})  g_{\f{a_1}{2}}  (x_1-z_{j_1})   \prod_{r=1}^{m}   \Big[  G_\theta (b_r-a_r) g_{\frac{b_r-a_r}{4}} (y_r-x_r)\Big]  \nonumber \\
  &\qquad \qquad \cdot \prod_{r=1}^{m-1}  \Big[g_{\frac{a_{r+1} - b_{\sfp(i_{r+1})}}{2}}(x_{r+1}-y_{\sfp(i_{r+1})}) g_{\frac{a_{r+1} - b_{\sfp(j_{r+1})}}{2}}(x_{r+1}-y_{\sfp(j_{r+1})})\Big]\dd \bx \dd \by  \nonumber \\
   &= \Big (  \frac{1}{\pi a_1} \Big)^2 \prod_{r=1}^{m}     G_\theta (b_r-a_r) \cdot \prod_{r=1}^{m} \frac{1}{2\pi \cdot \frac{b_r-a_r}{4}}  \cdot \prod_{r=1}^{m-1} \Big[\frac{1}{\pi   (a_{r+1} - b_{\sfp(i_{r+1})})}  \cdot   \frac{1}{\pi   (a_{r+1} - b_{\sfp(j_{r+1})})} \Big]\nonumber \\
   &\qquad \cdot \iint\limits_{x_1,y_1,\cdots,x_m,y_m\in \R^2} \exp \Big ( -\frac{|x_1 - z_{i_1}|^2}{a_1} -\frac{|x_1 - z_{j_1}|^2}{a_1}   - \sum_{r=1}^m \frac{|y_r-x_r|^2 }{(b_r-a_r)/2}  \nonumber \\
   &\qquad \qquad\qquad\qquad -\sum_{r=1}^{m-1} \frac{|x_{r+1}-y_{\sfp(i_{r+1})}| ^2}{  a_{r+1} - b_{\sfp(i_{r+1})}} -  \sum_{r=1}^{m-1}\frac{|x_{r+1}-y_{\sfp(j_{r+1})}| ^2}{  a_{r+1} - b_{\sfp(j_{r+1})}}\Big)\dd \bx \dd \by  \label{integral}.
\end{align}
{Recall that (see \eqref{221} and its following discussion) if $\sfp(i_r)=0$ (resp. $\sfp(j_r)=0$), then $(b_{\sfp(i_r)}, y_{\sfp(i_r)}) =(0,z_{i_r})$ (resp. $(b_{\sfp(j_r)}, y_{\sfp(j_r)}) =(0,z_{j_r})$).}

We will in fact prove the monotonicity claim of $\alpha \mapsto K^{(h)}(\alpha \bz)  $ in $\alpha>0$ `graph-by-graph', i.e., by establishing the same for the last integral term above. The full monotonicity then follows by integrating out the graph, i.e.,   the positive integer $m$, the collision pattern $ \mathcal{I}= (\{i_1,j_1\},...,\{i_m,j_m\}) \in \textsf{Col}^{(h,m)}$, and times $0\le a_1< b_1<...< a_m < b_m\le 1  $.

The monotonicity of the integral in \eqref{integral} follows from the following more general claim by considering the graph $G$ with $h$ boundary vertices as shown in Figure \ref{fig:feynmandiag} with values specified to be $z_1,\ldots, z_h$ and the conductances given by \eqref{conductprof}.

\textbf{Claim.} Let $\alpha>0$, $h \in \mathbb N$ and $z_1,\cdots,z_h \in \R^2$. Then for any connected graph $G = (V,E)$ with a boundary $\partial V = \{w_1,\cdots,w_h\} \subseteq V$ and non-negative conductance $(c_{uv})_{\{u,v\}\in E}$, setting the boundary values  $\phi^{(\alpha)}_{w_i}:= \alpha z_i$, the integral
    \begin{align} \label{312}
     \int _{(\R^2)^{V \setminus \partial V }} \exp\Big ( - \sum_{\{u,v\}\in E} c_{uv} |\phi^{(\alpha)}_u - \phi^{(\alpha)}_v|^2\Big) \prod_{v\in V \setminus \partial V} \dd \phi_{v}^{(\alpha)}
    \end{align}
    is non-increasing in $\alpha>0.$

~

\noindent
    To verify the claim, let us examine the exponent
    \begin{align} \label{311}
        \sum_{\{u,v\}\in E} c_{uv} |\phi^{(\alpha)}_u - \phi^{(\alpha)}_v|^2.
    \end{align}
    Let  $\mathcal H=(\mathcal H_v)_{v\in V}$ be the harmonic extension of $(z_1,\cdots,z_h)$ on $\partial V$, i.e. $\mathcal H_{w_i} = z_i$ for $i=1,2,\cdots,h$ and $\Delta \mathcal H_v  = 0$ on $v\in V\setminus \partial V$, where the weighted (un-normalized) graph Laplacian $\Delta$ defined in \eqref{laplacian} acts coordinate‐wise. Note also that for any $\alpha\in \R,$ the function $\alpha\cH$ is a harmonic function on $ V\setminus \partial V$ with $\alpha \mathcal H_{w_i} = \alpha z_i$ for $i=1,2,\cdots,h.$
Consider the change of variable
    \begin{align}\label{domainmarkov}
   \widetilde \phi^{(\alpha)}_v :=        \phi^{(\alpha)}_v - \alpha \mathcal H_v,\qquad v\in V.
    \end{align}
    Then for \emph{any} $\alpha  \in \R$,  since $\phi^{(\alpha)}_{w_i}:= \alpha z_i$ and $\mathcal H_{w_i} = z_i$ for $i=1,\cdots,h$, we have  $\widetilde \phi^{(\alpha)}_v = (0,0)$ on the  boundary $\partial V$. Further, the quantity \eqref{311} can be written as
        \begin{align*}
        &\sum_{\{u,v\}\in E} c_{uv} |\widetilde \phi^{(\alpha)}_u + \alpha\mathcal H_u - \widetilde \phi^{(\alpha)}_v-\alpha\mathcal H_v|^2 \\
        &=   \sum_{\{u,v\}\in E} c_{uv} |\widetilde \phi^{(\alpha)}_u  - \widetilde \phi^{(\alpha)}_v|^2 +\alpha^2   \sum_{\{u,v\}\in E} c_{uv} | \mathcal H_u -\mathcal H_v|^2 - 2\alpha  \sum_{\{u,v\}\in E}c_{uv}(\widetilde \phi^{(\alpha)}_u  - \widetilde \phi^{(\alpha)}_v) \cdot (\mathcal H_u -\mathcal H_v)   .
    \end{align*}
    Integrating by parts (see Lemma \ref{ibp} in Appendix), the last summation term above is written as
    \begin{align*}
        \sum_{\{u,v\}\in E}c_{uv}(\widetilde \phi^{(\alpha)}_u  - \widetilde \phi^{(\alpha)}_v) \cdot (\mathcal H_u -\mathcal H_v)   =\sum_{v\in  V} \widetilde{\phi}^{(\alpha)}_v \cdot  \Delta \mathcal{H}_v= \sum_{v\in \partial V} \widetilde{\phi}^{(\alpha)}_v  \cdot  \Delta \mathcal{H}_v =0.
    \end{align*}
Above, the second last equality holds since, by hypothesis,   $(\mathcal H_v)_{v\in V}$ is harmonic in $V\setminus \partial V$,
  and the last identity holds since $\widetilde \phi^{(\alpha)}_v=(0,0)$ on $\partial V$. Hence, the integral \eqref{312} is equal to
    \begin{align*}
            &\int _{(\R^2)^{V \setminus \partial V }}  \exp\Big ( - \sum_{\{u,v\}\in E} c_{uv} |\widetilde \phi^{(\alpha)}_u  - \widetilde \phi^{(\alpha)}_v|^2 -\alpha^2   \sum_{\{u,v\}\in E} c_{uv} | \mathcal H_u -\mathcal H_v|^2 \Big) \prod_{v\in V \setminus \partial V} d \widetilde\phi_{v}^{(\alpha)}\\
              &=\exp\Big ( -\alpha^2   \sum_{\{u,v\}\in E} c_{uv} | \mathcal H_u -\mathcal H_v|^2 \Big)  
\int _{(\R^2)^{V \setminus \partial V }} \exp\Big ( - \sum_{\{u,v\}\in E} c_{uv} |\widetilde \phi^{(\alpha)}_u  - \widetilde \phi^{(\alpha)}_v|^2 \Big) \prod_{v\in V \setminus \partial V} d \widetilde\phi_{v}^{(\alpha)}.
    \end{align*}
    Since  $\widetilde\phi_{v}^{(\alpha)}$ is an integration variable for  $v\in V \setminus \partial V$ and $\widetilde \phi^{(\alpha)}_v=(0,0)$ on $\partial V$ for any $\alpha$, the last integral term above is independent of $\alpha$. The pre-factor $\exp\big ( -\alpha^2   \sum_{\{u,v\}\in E} c_{uv} | \mathcal H_u -\mathcal H_v|^2 \big)$ is clearly monotonically decreasing in $\alpha$ which concludes the proof.
\end{proof}

\begin{remark}While this is a well known fact, nonetheless, let us remark that the decomposition in \eqref{domainmarkov} for $\alpha=1$ is indeed how one proves the domain Markov property of the GFF. Finally, while the flat data case has been the focus of this article leading to the one sided kernel in \eqref{kernel}, one may also define a two sided version where in addition to the initial points $z_1,\ldots, z_h,$  the final points  $w_1, \ldots, w_h$ are not integrated over but are fixed. The same argument as above implies a similar monotonicity statement in this case as well. 
\end{remark}

\section{From Gaussian to compactly supported test functions}  \label{sec5}
Having proved Proposition \ref{heat} and the monotonicity result Proposition \ref{monotone}, the only missing piece in the proof of Theorem \ref{main} is the following lemma, which compares the integrals of two test functions—namely, the Gaussian test function and a compactly supported function—against the kernel \(K^{(h)}(\bz)\).

\begin{lemma} \label{compare}
 Let  $0<\delta<1$.  Recalling that we set $K^{(h)}(\bz) = K^{(h)}_1(\bz)$, for sufficiently large $h$,
\begin{align*}
 \int_{([-\delta,\delta]^2)^h} K^{(h)}(\bz)  \dd \bz \ge   \delta^{2h}e^{-2h^2} \int_{(\mathbb{R}^2)^h} K^{(h)}(\bz)g_1^{\otimes h} (\bz)  \dd \bz .
\end{align*}
 
\end{lemma}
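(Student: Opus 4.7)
The approach combines the scaling monotonicity of $K^{(h)}$ from Proposition~\ref{monotone} with a polar-type decomposition of $(\R^2)^h \cong \R^{2h}$ adapted to the $\ell^\infty$ norm $M(\bz):=\max_i \|z_i\|_\infty$, under which $([-\delta,\delta]^2)^h = \{\bz : M(\bz) \le \delta\}$ is the corresponding ball. Writing $L$ and $R$ for the left and right hand sides of the desired inequality (without the prefactor $\delta^{2h} e^{-2h^2}$), I first split
\begin{align*}
R = \int_{M(\bz) \le \delta} K^{(h)}(\bz)\,g_1^{\otimes h}(\bz)\,\dd\bz + \int_{M(\bz) > \delta} K^{(h)}(\bz)\,g_1^{\otimes h}(\bz)\,\dd\bz =: I_1 + I_2.
\end{align*}
The pointwise bound $g_1^{\otimes h}(\bz) \le (2\pi)^{-h}$ immediately gives $I_1 \le (2\pi)^{-h} L$, which is already dominated by the target factor.

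For $I_2$, I would use the polar change of variables $\bz = r\boldsymbol{u}$ with $r = M(\bz)$ and $\boldsymbol{u} \in \partial B$ where $B:=[-1,1]^{2h}$, under which $\dd\bz = r^{2h-1}\,\dd r\,\dd\mathcal{H}^{2h-1}(\boldsymbol{u})$. Since $r \ge \delta$, Proposition~\ref{monotone} (applied with ratio $r/\delta \ge 1$) gives $K^{(h)}(r\boldsymbol{u}) \le K^{(h)}(\delta\boldsymbol{u})$, while $|\boldsymbol{u}|_2 \ge M(\boldsymbol{u}) = 1$ gives $g_1^{\otimes h}(r\boldsymbol{u}) \le (2\pi)^{-h} e^{-r^2/2}$. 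Combined with $\int_0^\infty r^{2h-1}e^{-r^2/2}\,\dd r = 2^{h-1}\Gamma(h)$, this reduces $I_2$ to a surface integral on $\partial B$:
\begin{align*}
I_2 \le (2\pi)^{-h}\,2^{h-1}\Gamma(h) \cdot S,\qquad S := \int_{\partial B} K^{(h)}(\delta\boldsymbol{u})\,\dd\mathcal{H}^{2h-1}(\boldsymbol{u}).
\end{align*}

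To compare $S$ with $L$, I would invoke Proposition~\ref{monotone} in the opposite direction: for $s \in (0,\delta)$ and $\boldsymbol{u}\in \partial B$, $K^{(h)}(s\boldsymbol{u}) \ge K^{(h)}(\delta\boldsymbol{u})$, so that the same polar decomposition applied to $L$ yields
\begin{align*}
L = \int_0^\delta s^{2h-1}\int_{\partial B} K^{(h)}(s\boldsymbol{u})\,\dd\mathcal{H}^{2h-1}(\boldsymbol{u})\,\dd s \ge \frac{\delta^{2h}}{2h}\, S.
\end{align*}
Collecting the estimates gives $R \le \bigl[(2\pi)^{-h} + h!\,\pi^{-h}\delta^{-2h}\bigr]\, L$; for $\delta<1$ and $h$ large, the second term dominates, and Stirling's formula gives $h!\,\pi^{-h} = e^{O(h\log h)} \ll e^{2h^2}$, so $R \le \delta^{-2h} e^{2h^2} L$ as required.

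The main technical delicacy I anticipate is the clean setup of the $\ell^\infty$-polar decomposition, in particular justifying the Jacobian $r^{2h-1}$ and identifying the surface measure as $(2h-1)$-dimensional Hausdorff measure on $\partial[-1,1]^{2h}$ (verifiable by computing a test integral such as the volume of a scaled cube). Once this is in place, the heart of the argument lies in the two complementary applications of Proposition~\ref{monotone}: one that compresses the Gaussian tail mass onto the scaling sphere $\partial[-\delta,\delta]^{2h}$, and one that pulls that boundary mass back into the interior ball which is exactly $L$.
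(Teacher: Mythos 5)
Your proposal is correct and follows essentially the same route as the paper: split $\int K^{(h)} g_1^{\otimes h}$ at the $\ell^\infty$-sphere of radius $\delta$, apply the $\ell^\infty$-polar (co-area) decomposition, and use Proposition~\ref{monotone} in both directions to compress the Gaussian tail onto the sphere and to pull that surface mass back into the interior, finishing with the $\int_0^\infty r^{2h-1}e^{-r^2/2}\,\dd r = 2^{h-1}\Gamma(h)$ estimate and Stirling. The one minor organizational difference (slightly cleaner than the paper's) is that you lower-bound the weight-free integral $L=\int_{Q_\delta}K^{(h)}$ directly by $\frac{\delta^{2h}}{2h}S$, rather than lower-bounding $\int_{Q_\delta}K^{(h)}g_1^{\otimes h}$ first (which costs an extra factor $e^{-h\delta^2}$ from $|\bw|_2\le\sqrt{2h}$) and invoking $g_1\le1$ at the end; the two are interchangeable and land at the same final bound.
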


Before proving the lemma, let us first finish the proof of Theorem \ref{main} which is an immediate consequence of this lemma and Proposition \ref{heat}.

\begin{proof}[Proof of Theorem \ref{main}]
Since $\varphi$ is a smooth non-negative function such that $\varphi(0)>0$, there exist constants  $c >0$ and $0<\delta<1$ such that
  \begin{align}
      \varphi (z)\ge c \1_{[-\delta,\delta]^2} (z) ,\qquad \forall z \in \R^2.
  \end{align}
Using \eqref{hmoment}, Lemma \ref{compare} and Proposition \ref{heat}, for sufficiently large $h$,
  \begin{align*}
      \E\Big[ \big( \mathscr{Z}_{1}^\theta(\varphi) \big)^h\Big]   \ge \frac{c^h}{2^h}   \int_{([-\delta,\delta]^2)^h} K^{(h)}(\bz)  \dd \bz & \ge    \frac{c^h}{2^h}   \delta^{2h}e^{-2h^2} \int_{(\mathbb{R}^2)^h} K^{(h)}(\bz)g_1^{\otimes h} (\bz)  \dd \bz  \\
      &= c^h\delta^{2h}e^{-2h^2} \E\Big[ \big( \mathscr{Z}_{1}^\theta(g_1) \big)^h\Big]   \\
      &\ge   c^h\delta^{2h}e^{-2h^2}  \cdot  \exp (e^{c_1h})  \ge \exp (e^{c_1h/2}),
  \end{align*} which concludes the proof. 
\end{proof}

We conclude this section by providing the proof of Lemma \ref{compare}.
 
 \begin{proof}[Proof of Lemma \ref{compare}]
It suffices to prove that  for large enough $h$,
    \begin{align} \label{343}
\int_{(\mathbb{R}^2)^h} K^{(h)}(\bz)g_1^{\otimes h} (\bz)  \dd \bz \le   {\delta^{-2h}}  e^{2h^2} \int_{([-\delta,\delta]^2)^h} K^{(h)}(\bz)g_1^{\otimes h} (\bz)  \dd \bz.
\end{align}
Indeed, given this, as $g_1^{\otimes h} (\bz) \le 1 $ for all $\bz\in (\mathbb{R}^2)^h$ (since $g_1\le 1$ pointwise by \eqref{heatkernel1}),
\begin{align*}
 \int_{([-\delta,\delta]^2)^h} K^{(h)}(\bz)  \dd \bz \ge     \int_{([-\delta,\delta]^2)^h} K^{(h)}(\bz)  g_1^{\otimes h} (\bz) \dd \bz  \overset{\eqref{343}}{\ge}  
  \delta^{2h} e^{-2h^2} \int_{(\mathbb{R}^2)^h} K^{(h)}(\bz)g_1^{\otimes h} (\bz)  \dd \bz ,
\end{align*}
concluding the proof of lemma.\\

\noindent
To prove \eqref{343}, set $Q_\delta:=([-\delta,\delta]^2)^h$.
Then we write
\begin{align} \label{330}
\int_{(\mathbb{R}^2)^h}K^{(h)}(\bz)g_1^{\otimes h} (\bz)\dd \bz 
&= {\int_{Q_\delta}K^{(h)}(\bz)g_1^{\otimes h} (\bz)\dd \bz}  +  {\int_{Q_\delta^c}K^{(h)}(\bz)g_1^{\otimes h} (\bz)\dd \bz}
=: I_{Q_\delta}+I_{Q_\delta^c}.
\end{align}
We aim to upper bound \( I_{Q_\delta^c} \) in terms of \( I_{Q_\delta} \). Define 
\begin{align} \label{342}
f(\bz) :=\|\bz\|_\infty = \max_{1\le i\le h} |z_i|_\infty,\qquad \bz = (z_1,\dots,z_h)\in (\mathbb{R}^2)^h,
\end{align}
where  $|z_i|_\infty:= \max \{|z_{i,1}|,|z_{i,2}|\} $ for  $z_i = (z_{i,1},z_{i,2})\in \R^2.$
Then  \(f\) is \(1\)-Lipschitz {with respect to the $\ell^2$-norm} and is differentiable almost everywhere with  $|\nabla f(\bz)| = 1$ (recall that $|\cdot| := |\cdot|_2$ denotes the $\ell^2$-norm).

To control the integral in \eqref{330}, {
we use the following co-area formula \cite[Theorem 3.1]{coarea}}:  For any Lipschitz function $f : \R^d \to \mathbb{R}$ and a 
  nonnegative function  $u : \R^d \to [0,\infty),$ 
\begin{align*}
\int_{\R^d} u(\bz)  |\nabla f(\bz)|  \dd \bz = \int_{0}^{\infty} \Big( \int_{f^{-1}(t)} u(\bz)  \dd \mathcal{H}^{d-1}(\bz) \Big) \dd t,
\end{align*}
where   \( \mathcal{H}^{d-1} \) denotes the \((d-1)\)-dimensional Hausdorff measure.
Applying this for our $f$ defined  in \eqref{342} on $(\mathbb{R}^2)^h$, we deduce that for any nonnegative function  $u : (\mathbb{R}^2)^h \to [0,\infty),$ 
\begin{align*}
\int_{(\mathbb{R}^2)^h} u(\bz)  \dd \bz = \int_{0}^{\infty} \Bigl( \int_{f^{-1}(r)} u(\bz) \dd \mathcal{H}^{2h-1}(\bz) \Bigr)  \dd r.
\end{align*}
By a change of variables ($\ell^\infty$-spherical coordinates), writing $r= f(\bz)$ and $\bw = \frac{\bz}{f(\bz)},$
\begin{align*}
\dd \mathcal{H}^{2h-1}(\bz) = r^{2h-1} \dd\sigma(\bw),
\end{align*}
where \(\dd\sigma(\bw)\) denotes the surface measure on \(S  := \{ \bw\in (\mathbb{R}^2)^h : \|\bw\|_\infty = 1 \} \). 
In view of this, 
\begin{align*}
I_{Q_\delta^c} = \int_{\delta}^\infty \int_{ S} K^{(h)}(r\bw)  g_1^{\otimes h} (r\bw)  r^{2h-1}\dd\sigma(\bw) \dd r.
\end{align*}
The outside integral ranges from $\delta$ to $\infty$ since $Q_\delta^c$ is indeed the set of all points where $f>\delta.$
Now note that for  $r \ge \delta$ and $\bw\in S$, by the monotonicity of the kernel (Proposition \ref{monotone}),  
\begin{align*}
K^{(h)}(r\bw)g_1^{\otimes h} (r\bw) r^{2h-1}\le K^{(h)}(\delta\bw)g_1^{\otimes h} (r\bw) r^{2h-1} \le K^{(h)} (\delta \bw)   \frac{1}{(2\pi)^{h}}\exp\Bigl(-\frac{r^2}{2}\Bigr) r^{2h-1}.
\end{align*}
{Here we used the fact that since $|\bw| =|\bw| _2 \ge 1$ for any $\bw\in S,$ $$g_1^{\otimes h} (r\bw)\le \frac{1}{(2\pi)^h}\exp\Bigl(-\frac{r^2}{2}\Bigr).$$}
Thus
\begin{align} \label{331}
I_{Q_\delta^c} \le \frac{1}{(2\pi)^{h}} \Big( \int_{ S} K^{(h)} (\delta \bw)   \dd\sigma(\bw) \Big)
\Big( \int_{\delta}^\infty r^{2h-1}\exp\Bigl(-\frac{r^2}{2}\Bigr) \dd r \Big). 
\end{align}
Using a change of variable $u:=\tfrac{r^2}{2},$ for large enough $h$, 
\begin{align*}
\int_{\delta}^\infty r^{2h-1}\exp\Bigl(-\frac{r^2}{2}\Bigr) \mathrm{d}r
&=\int_{\delta^2/2}^\infty (2u)^{h-\frac{1}{2}}e^{-u} \frac{\mathrm{d}u}{\sqrt{2u}}
\\
&= {2^{ h-1}}\int_{\delta^2/2}^\infty u^{ h-1}e^{-u} \mathrm{d}u\le {2^{ h-1}} \Gamma(h)
\le e^{h\log h},
\end{align*}
where $\Gamma(h)=\int_{0}^\infty u^{ h-1}e^{-u} \mathrm{d}u=(h-1)!$ denotes the Gamma function. The above bound is a simple consequence of Stirling's formula and that $e>2.$ Hence, applying this to \eqref{331}, for sufficiently large $h$,
\begin{align} \label{332}
I_{Q_\delta^c} \le e^{h\log h}\frac{1}{(2\pi)^{h}} \Big( \int_{ S} K^{(h)} (\delta \bw)   \dd\sigma(\bw) \Big)
.
\end{align}
{Similarly, 
\begin{align*}
I_{Q_\delta} = \int_{0}^{\delta} \int_{S} K^{(h)}(r\bw)g_1^{\otimes h} (r\bw) r^{2h-1}\dd\sigma(\bw) \dd r.
\end{align*} 
For \(r\in [0,\delta]\) and \(\bw\in S\), again by the monotonicity of the kernel,
\begin{align*}
K^{(h)}(r\bw)g_1^{\otimes h} (r\bw) \ge K^{(h)} (\delta \bw)   \frac{1}{(2\pi)^{h}} \exp\Bigl(- \frac{2h\delta^2}{2}\Bigr).
\end{align*}
Here we used the fact that $|\bw| =|\bw| _2 \le \sqrt{2h}$ for any $\bw\in S.$}
Thus
\begin{align*}
I_{Q_\delta} &\ge e^{-h\delta^2}  \frac{1}{(2\pi)^{h}} \left( \int_{0}^{\delta} r^{2h-1} \dd r \right) \left( \int_{S} K^{(h)} (\delta \bw)  \dd\sigma(\bw) \right) \\
&= e^{-h\delta^2}\frac{1}{(2\pi)^{h}} \cdot \frac{\delta^{2h}}{2h}  \left( \int_{S} K^{(h)} (\delta \bw)  \dd\sigma(\bw) \right).
\end{align*}
Combining this with \eqref{332}, as $0<\delta<1$, for sufficiently large $h$,
\begin{align*}
I_{Q_\delta^c} &\le e^{h\log h}  e^{h\delta^2} \frac{2h}{\delta^{2h}}  I_{Q_\delta}  \le  {\delta^{-2h}} e^{h^2} I_{Q_\delta},\\
&\implies I_{Q_\delta^c}+I_{Q_\delta}\le {\delta^{-2h}} e^{2h^2} I_{Q_\delta}.
\end{align*}
The proof of \eqref{343} now follows from \eqref{330}.
  \end{proof}

\begin{remark}\label{upperarg}Instead of having a detailed discussion on what might be involved in improving the upper bound of $\exp(\exp (h^2))$ to match the lower bound we simply mention that much of the difficulty is involved in estimating integrals of the form 
\begin{equation}\label{integralest}
\int_{(x_1,\ldots, x_{m+h-1})\in [0,1]^{m+h-1}}\prod_{i=1}^{m}\frac{1}{(x_{i}+x_{i+1}+\ldots +x_{i+h-1})}\dd x_1\dd x_2\ldots \dd x_{m+h-1}.
\end{equation}
When $h=2$, one can employ an inductive argument (see \cite{cosco, shrinking}) to argue that the above integral {grows at most exponentially in $m$. Indeed, this is the correct order of growth as a lower bound can be seen via the following reasoning. Setting $X_1,X_2,\cdots,X_{m+1}$ to be i.i.d. uniform random variables on $[0,1]$, the above integral (with $h=2$) is written as
\begin{align} \label{jensen}
\mathbb E \Big[\frac{1}{(X_1+X_2) \cdots (X_m+X_{m+1})}\Big]  .
\end{align}
Taking the logarithm, by Jensen's inequality,
\[
\log \mathbb E \Big[\frac{1}{(X_1+X_2) \cdots (X_m+X_{m+1})}\Big] \ge \mathbb E \log  \Big[\frac{1}{(X_1+X_2) \cdots (X_m+X_{m+1})}\Big] = -m \mathbb E  \log (X_1+X_2).
\]
Observe that by   Jensen's inequality again,
\[
\mathbb E  \log (X_1+X_2) <\log  \mathbb E  (X_1+X_2) = \log 1 = 0
\]
(note that we have a strict inequality since $X_1+X_2$ is a non-degenerate random variable).
This shows that the quantity \eqref{jensen} is at least $e^{cm}$ for some constant $c>0$.}

 However, for $h\ge 3$, even showing finiteness of this integral is non-trivial. However, an inductive argument as above can be used to prove an exponential in $m$ upper bound. Nonetheless, it is reasonable to expect that the dominant contribution comes from when all the $x_{i}$s are $O(1)$ leading to an estimate of $\frac{1}{h^m}$ (up to an exponential in $m$ term). This will be taken up in future work. 
\end{remark}

\section{Upper tail estimates}  \label{sec6}
Given Theorem \ref{main}, we are now in a position to prove  Theorem \ref{tail}.
Let us start by recalling the statement which states that if $\varphi$  is a compactly supported positive smooth function on $\R^2$ such that $\varphi(0)>0$, then, for any $z>0,$
    \begin{align}\label{tail234}
\exp\Bigl(-   (\log z)^{(\log \log z)^{1+o(1)}}\Bigr) \le   \P(X_\vp  > z)\le  \exp\Bigl(-\Omega(1) \cdot \log z \cdot \sqrt{\log\log z}\Bigr).
    \end{align}
    Here, $o(1) \rightarrow 0$ as $z\rightarrow \infty$  and $\Omega(1)$ remains bounded below by a positive constant.

\begin{proof}[Proof of Theorem \ref{tail}]
Throughout the proof, we simplify the notation as {$X:=X_\varphi = \mathscr{Z}_{1}^\theta(\varphi) \ge 0$.}
Recall from \eqref{upperbound1} that the upper bound for the moment of the Critical $2d$ SHF is
\begin{align}
\E[X^h] &\le \exp\bigl(\exp({ch^2})\bigr),
\end{align}
for some $c>0.$
By Markov’s inequality, for any $t>0,$
\begin{align*}
\P(X \ge t) \le \frac{\E[X^h]}{t^h} \le   \exp\bigl(e^{c h^2} - h\log t\bigr).
\end{align*}
Taking
\begin{align*}
h &=  \Big\lfloor \frac{1}{\sqrt{c}}\sqrt{\log\log t} \Big \rfloor,
\end{align*} 
we obtain
\begin{align} \label{upper}
\P(X \ge t) \le t \cdot  \exp\bigl(-\frac{\log t}{\sqrt c} \sqrt{\log\log t} \bigr)= \exp\bigl(-\frac{(1+o(1))}{\sqrt{c}}\log t \sqrt{\log\log t} \bigr).
\end{align}
Relying on this a priori upper tail upper bound, we now see how the moment lower bound established in Theorem \ref{main} yields upper tail lower bounds.  
For $z>100$ (a large number), we introduce the following notations:
\begin{align*}
L &:= \log\log z, & M &:= \log L =  \log \log\log z ,\\
h &:= L M-10, & w &:= \exp\bigl(\exp(c L^2M^2)\bigr).
\end{align*}
Then we write
\begin{align*}
\E[X^h] &= \int_0^\infty h t^{h-1}\P(X \ge t) \dd t \\ &= \int_0^z h t^{h-1}\P(X \ge t) \dd t + \int_z^w h t^{h-1}\P(X \ge t) \dd t + \int_w^\infty h t^{h-1}\P(X \ge t) \dd t := I_1 + I_2 + I_3.
\end{align*}
On $[0,z]$,
\begin{align} \label{411}
I_1 &= \int_0^z h t^{h-1}\P(X \ge t) \dd t \le \int_0^z h t^{h-1}\dd t = z^h.
\end{align}
On $[z,w]$,
\begin{align} \label{412}
I_2 &= \int_z^w h t^{h-1}\P(X \ge t) \dd t \le \int_z^w  h t^{h-1}\P(X \ge z)\dd t \le   w^h\P(X \ge z).
\end{align}
On $[w,\infty)$, noting that $\log\log t \ge \log\log w  = c L^2M^2$ for $t \ge w$,
\begin{align*}
\P(X \ge t) \overset{\eqref{upper}}{\le}  t \cdot  \exp\bigl(-\frac{\log t}{\sqrt c} \sqrt{\log\log t} \bigr) 
\le t^{-LM+1},
\end{align*}
implying that  for sufficiently large $z$,
\begin{align} \label{413}
I_3 &= \int_w^\infty h t^{h-1}\P(X \ge t) \dd t \le h\int_w^\infty t^{h-1-LM+1}\dd t = C h w^{-9} \le 1.
\end{align}
Hence combining \eqref{411}-\eqref{413},
\begin{align} \label{415}
\E[X^h] \le z^h + w^h\P(X \ge z) + 1.
\end{align}
Note that by Theorem \ref{main}, for sufficiently large $z$,
\begin{align*}
\log \E[X^h] \ge e^{c_0h}    \ge 2 (LM -10)e^L= 2\log(z^h),
\end{align*}
implying that  $z^h \le \E[X^h]/3.$ Thus, applying this to    \eqref{415},
\begin{align*}
\P(X \ge z) \ge \frac{1}{w^h}\cdot \frac{\E[X^h]}{2}   \ge 
\exp(- h\log w)  &\ge \exp (-LM e^{c L^2M^2}  ) = \exp\Bigl(-   (\log z)^{(\log \log z)^{1+o(1)}}\Bigr),
\end{align*}
which finishes the proof.

\end{proof}

\begin{remark}\label{elab}The above proof allows us to elaborate on Remark \ref{optrem}. Note that if the upper and lower bounds of $\E[X^h]$ matched to be $\exp(\exp ({c}h))$ {for some constant $c = c(\theta)>0$} (we will ignore lower order factors in this discussion), first of all, using the upper bound, the upper tail upper bound in \eqref{upper} would improve to $\exp\bigl(-\Omega(1) \cdot {\log t}\cdot \log\log t \bigr).$ Feeding that into the lower bound argument results in essentially only one change where we define $w= \exp\bigl(\exp({c}LM)\bigr)$ which leads to a final bound of the form \begin{align*}
\P(X \ge z) \ge \frac{1}{w^h}\cdot \frac{\E[X^h]}{2}   \ge 
\exp(- h\log w)  &\ge \exp (-LM e^{{c}LM}  )  \ge      \exp\Bigl(-   (\log z)^{{O(1)} \cdot \log \log \log z}\Bigr). 
\end{align*}
\end{remark}

\section{Appendix}

In this appendix, we provide the proofs of some of the statements used in the main body of the paper.
The first is a crude upper bound on the number of spanning trees of a connected graph, in terms of degree of vertices.
\begin{lemma} \label{graph}
Let \(G\) be a connected graph on \(n\) vertices with vertex degrees \(d_1, d_2, \dots, d_n\). Then the number \(\tau(G)\) of spanning trees of \(G\) satisfies that, for any $1\le k\le n,$
\[
\tau(G) \le  \frac{1}{d_k}\cdot  \prod_{i=1}^{n} d_i.
\]
\end{lemma}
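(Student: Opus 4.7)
The plan is to prove the bound via a direct injection rather than via the matrix-tree theorem. The idea is to root each spanning tree at the distinguished vertex~$k$, and then exploit the fact that in a rooted tree every non-root vertex has a unique parent among its neighbors in the ambient graph~$G$.

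More precisely, I would proceed as follows. Let $\mathcal{T}(G)$ denote the set of spanning trees of $G$, and for each $T\in\mathcal{T}(G)$ and each vertex $i\ne k$, define $p_T(i)$ to be the unique neighbor of $i$ in $T$ lying on the (unique) $i$-to-$k$ path in $T$. This is well-defined because $T$ is a tree containing $k$. The first key step is to observe that $p_T(i)\in N_G(i)$, the neighborhood of $i$ in $G$, because every edge of $T$ is an edge of $G$. Hence the assignment $T\mapsto p_T$ defines a map
\begin{equation*}
\Psi:\mathcal{T}(G)\longrightarrow \prod_{i\ne k} N_G(i).
\end{equation*}

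The second key step is to verify that $\Psi$ is injective. Given any function $p:V\setminus\{k\}\to V$ with $p(i)\in N_G(i)$ that arises as some $p_T$, one can recover $T$ simply as the edge set $\{\{i,p(i)\}:i\ne k\}$: this collection has exactly $n-1$ edges and must equal $E(T)$, because $T$ has $n-1$ edges and each such $\{i,p(i)\}$ is in $E(T)$ by construction. Thus distinct spanning trees yield distinct parent functions.

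Combining these two steps,
\begin{equation*}
\tau(G)=|\mathcal{T}(G)|\le \Bigl|\prod_{i\ne k} N_G(i)\Bigr|=\prod_{i\ne k} d_i=\frac{1}{d_k}\prod_{i=1}^n d_i,
\end{equation*}
which is the claimed inequality. There is essentially no obstacle here: the only point that requires a moment's care is the injectivity check, i.e.\ confirming that the $n-1$ edges $\{i,p_T(i)\}$ recover $T$ exactly, but this is immediate from cardinality together with the fact that each such edge already belongs to $T$. Note that this proof makes no use of connectivity beyond ensuring $\tau(G)\ge 1$; if $G$ is disconnected then $\tau(G)=0$ and the inequality is trivial.
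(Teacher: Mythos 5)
Your proof is correct and is essentially the same argument as the paper's: root the tree at a distinguished vertex, encode each spanning tree by the parent edge of every non-root vertex, note the encoding is injective since the $n-1$ parent edges recover the tree, and bound the count by $\prod_{i\ne k} d_i$.
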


\begin{proof}
Let $d(v)$ be a degree of a vertex $v$.
Choose an arbitrary vertex \(o\) to serve as the root. In every spanning tree \(T\) of \(G\), each vertex \(v \neq o\) is connected to the tree by a unique edge (called \emph{parent edge}), which connects \(v\) to a vertex that lies on the unique path from \(v\) to the root \(o\). There are at most \(d(v)\) choices for the edge that connects \(v\) to its parent.

Thus, an \emph{encoding} of the spanning tree can be obtained by specifying, for each vertex \(v \neq o\), which of the \(d(v)\) edges incident on \(v\) is used to connect \(v\) to its parent. This encoding is injective, meaning that no two distinct spanning trees yield the same collection of choices. This is because the edge set of a spanning tree is completely determined by the set of $n-1$ encoded edges. Consequently, the number of spanning trees is at most
\[
\tau(G) \le \prod_{v \neq o} d(v).
\]
Since the root is arbitrary, the result follows.
\end{proof}

The final result included in this appendix records a well known integration by parts result for graphs. As required by our application, our graph $G = (V,E,c)$ will be weighted with $c={(c_e)}_{e\in E}$ being the conductance vector. 

\begin{lemma}\label{ibp}
Let $d \in \mathbb N$ and $G = (V,E,c)$ be any finite weighted graph. For any functions $f,g:V \rightarrow \R^d,$
    \begin{align*}
\sum_{\{x,y\}\in E}c_{\{x,y\}}(f(x)-f(y))\cdot (g(x)-g(y))=\sum_{u\in V}f(u)\cdot  \Delta g(u).
\end{align*}
Here, $\Delta$ denotes the weighted graph Laplacian matrix defined in \eqref{laplacian} which acts  coordinate‐wise in $\R^d$.
\end{lemma}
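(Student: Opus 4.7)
The plan is to prove the identity by expanding the dot product on the left-hand side, regrouping by vertex, and then matching the resulting expression to the definition $\Delta = D - A$ from \eqref{laplacian}. Since the identity is linear in each coordinate of $f$ and $g$, it suffices to treat them as scalar-valued (or equivalently to verify the identity coordinate-wise); the vector version then follows by summing the $d$ coordinate identities, which is exactly what the symbol ``$\cdot$'' does.

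First, expand
\begin{align*}
(f(x)-f(y))\cdot (g(x)-g(y)) &= f(x)\cdot g(x) - f(x)\cdot g(y) - f(y)\cdot g(x) + f(y)\cdot g(y).
\end{align*}
The diagonal contributions $f(x)\cdot g(x)+f(y)\cdot g(y)$, summed with weight $c_{\{x,y\}}$ over unordered edges $\{x,y\}\in E$, rearrange by vertex into $\sum_{u\in V} f(u)\cdot g(u)\, D_{uu}$, where $D_{uu}=\sum_{v:\{u,v\}\in E} c_{\{u,v\}}$ is the weighted degree as in \eqref{laplacian}. The off-diagonal contributions $-c_{\{x,y\}}(f(x)\cdot g(y)+f(y)\cdot g(x))$, summed over unordered edges, can be reindexed as a sum over ordered pairs $(u,v)$ with $\{u,v\}\in E$, giving $-\sum_{u\in V} f(u)\cdot \sum_{v:\{u,v\}\in E} c_{\{u,v\}}\, g(v)=-\sum_{u\in V} f(u)\cdot (Ag)(u)$.

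Combining the two pieces yields
\begin{align*}
\sum_{\{x,y\}\in E} c_{\{x,y\}} (f(x)-f(y))\cdot (g(x)-g(y))
&= \sum_{u\in V} f(u)\cdot \bigl(D_{uu}\, g(u)-(Ag)(u)\bigr) \\
&= \sum_{u\in V} f(u)\cdot (\Delta g)(u),
\end{align*}
using $\Delta = D-A$ from \eqref{laplacian} and the coordinate-wise convention. The main (minor) bookkeeping obstacle is being consistent about passing between unordered edges $\{x,y\}$ and ordered pairs $(u,v)$: each unordered edge contributes once to the left-hand side but is counted twice when reindexed as an ordered sum, and the symmetric form of the expansion above is exactly what makes these bookkeeping factors cancel cleanly. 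Beyond this the proof is a routine algebraic manipulation.
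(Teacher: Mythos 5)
Your proof is correct and is essentially the same elementary summation-by-parts computation as the paper's: you expand the quadratic form and regroup by vertex, matching the $D$ and $A$ pieces of $\Delta=D-A$ separately, while the paper reaches the identical conclusion by symmetrizing over directed edges and recognizing $\sum_{v\sim u}c_{\{u,v\}}(g(u)-g(v))$ as $\Delta g(u)$. The reduction to the scalar case and the unordered-versus-ordered edge bookkeeping are both handled correctly, so nothing is missing.
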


\begin{proof}
Since the Laplacian acts coordinate-wise, it suffices to consider $d=1.$
  Let $\vec E$ be the collection of directed edges in $G$ where every edge $\{u,v\}$ appears with two orientations, one pointing to $u$ denoted by $(u,v)$ and the other to $v$ denoted by $(v,u)$. Thus $|\vec E| = 2|E|.$ Using this note that 
\begin{align*}
\sum_{\{x,y\}\in E}c_{\{x,y\}}(f(x)-f(y))(g(x)-g(y))
&=\frac{1}{2}\sum_{(u,v)\in\vec E}c_{\{u,v\}}(f(u)-f(v))(g(u)-g(v))\\
&=\frac{1}{2}\sum_{(u,v)\in\vec E}\Bigl[f(u)c_{\{u,v\}}(g(u)-g(v))-f(v)c_{\{u,v\}}(g(u)-g(v))\Bigr]\\
\text{interchanging}\, u \, \text{and}\, v\, \text{in the second term}&=\frac{1}{2}\Bigl(\sum_{(u,v)\in\vec E}f(u)c_{\{u,v\}}(g(u)-g(v))
    -\sum_{(v,u)\in\vec E}f(u)c_{\{u,v\}}(g(v)-g(u))\Bigr)\\
    &=\frac{1}{2}\Bigl(\sum_{(u,v)\in\vec E}f(u)c_{\{u,v\}}(g(u)-g(v))
    +\sum_{(u,v)\in\vec E}f(u)c_{\{u,v\}}(g(u)-g(v))\Bigr)\\
&=\sum_{(u,v)\in\vec E}f(u)c_{\{u,v\}}(g(u)-g(v))\\
&=\sum_{u\in V}f(u)\,\underbrace{\sum_{v\sim u}c_{\{u,v\}}(g(u)-g(v))}_{\Delta g(u)}=\sum_{u\in V}f(u)\,\Delta g(u).
\end{align*}

\end{proof}

\bibliographystyle{plain}
\bibliography{SHF}

\end{document}